\documentclass[11pt,reqno]{amsart}

\usepackage{microtype}
\usepackage[letterpaper,left=1in,right=1in,top=1in,bottom=1in]{geometry}
\usepackage{xcolor}
\usepackage{mathtools}
\usepackage{amssymb}
\usepackage{mathrsfs}
\usepackage{aliascnt}
\usepackage[colorlinks=true,linkcolor=blue!55!black,citecolor=blue!55!black,urlcolor=blue!55!black]{hyperref}
\usepackage[nameinlink,capitalize,noabbrev]{cleveref}

\allowdisplaybreaks
\numberwithin{equation}{section}

\newtheorem{theorem}{Theorem}[section]
\newaliascnt{proposition}{theorem}
\newtheorem{proposition}[proposition]{Proposition}
\aliascntresetthe{proposition}
\newaliascnt{lemma}{theorem}
\newtheorem{lemma}[lemma]{Lemma}
\aliascntresetthe{lemma}
\newaliascnt{corollary}{theorem}
\newtheorem{corollary}[corollary]{Corollary}
\aliascntresetthe{corollary}
\theoremstyle{remark}
\newaliascnt{remark}{theorem}
\newtheorem{remark}[remark]{Remark}
\aliascntresetthe{remark}

\crefname{theorem}{Theorem}{Theorems}
\crefname{proposition}{Proposition}{Propositions}
\crefname{lemma}{Lemma}{Lemmas}
\crefname{corollary}{Corollary}{Corollaries}
\crefname{remark}{Remark}{Remarks}

\newcommand{\R}{\mathbb R}
\newcommand{\T}{\mathcal T}
\newcommand{\dd}{\,\mathrm d}

\title{A quantitative Robin P\'olya--Szeg\H{o} theorem for tangential polygons}

\author{Changfeng Gui}
\address{Department  of  Mathematics,  University  of  Macau,  Macau  SAR,  P. R. China}
\address{Zhuhai UM Science and Technology Research Institute, Hengqin, Guangdong, 519031,  P. R. China}
\email{changfenggui@um.edu.mo}
\author{Yeyao Hu}
\address{School of Mathematics and Statistics, HNP-LAMA, Central South University, Changsha, Hunan 410083, P. R. China}
\email{huyeyao@gmail.com}
\author{Qinfeng Li}
\address{School of Mathematics, Hunan University, Changsha, P.R. China.}
\email{liqinfeng1989@gmail.com}
\author{Hang Yang}
\address{School of Mathematics, Hunan University, Changsha, P.R. China.}
\email{hangyang0925@gmail.com}

\subjclass[2020]{35J25, 49Q10, 52A40}
\keywords{Robin torsional rigidity, quantitative P\'olya--Szeg\H{o} inequality, tangential polygon, regular polygon, mixed boundary problem, shape optimization}

\hypersetup{
  pdftitle={A quantitative Robin Polya--Szego theorem for tangential polygons},
  pdfauthor={Changfeng Gui, Yeyao Hu, Qinfeng Li, Hang Yang},
  pdfkeywords={Robin torsional rigidity, quantitative Polya--Szego inequality, tangential polygon, regular polygon, shape optimization}
}

\begin{document}

\begin{abstract}
We extend the quantitative Dirichlet theorem of Gui, Hu,
and Li \cite{GuiHuLi2026} to Robin torsional rigidity. For every fixed
$\beta>0$, we prove that the regular $N$-gon uniquely
maximizes Robin torsional rigidity among tangential
$N$-gons of prescribed area. Our explicit perimeter
deficit bound contains the Dirichlet deficit obtained
by Gui, Hu, and Li \cite{GuiHuLi2026} and an additional Robin term that is
positive for nonregular tangential polygons. For triangles, we
answer the fixed-area optimization problem recorded
as open by Keady and Wiwatanapataphee \cite[Appendix A.1.2]{KeadyWiwatanapataphee2016}. We also establish
strict monotonicity along equal-area regular polygons
as the number of sides increases, with an explicit
bound for each consecutive gap. The proof combines
mixed Robin--Neumann triangular cells, spectral estimates
for angular concavity, and a quantitative scaling estimate.
\end{abstract}

\maketitle
\enlargethispage{2pt}


\section{Introduction}

Let $\Omega\subset\R^2$ be a bounded domain with Lipschitz boundary, and
let $\beta>0$. The Robin torsion function $u_{\Omega,\beta}$ is the
unique weak solution of
\begin{equation}\label{eq:Robin-problem}
\begin{cases}
-\Delta u=1 & \text{in }\Omega,\\
\partial_\nu u+\beta u=0 & \text{on }\partial\Omega,
\end{cases}
\end{equation}
where $\nu$ denotes the outward unit normal. Its integral
\(\T_\beta(\Omega):=\int_\Omega u_{\Omega,\beta}\dd x\)
is called the Robin torsional rigidity. Equivalently,
\begin{equation}\label{eq:variational-global}
\T_\beta(\Omega)=\max_{v\in H^1(\Omega)}
\left\{
2\int_\Omega v\dd x-\int_\Omega|\nabla v|^2\dd x
-\beta\int_{\partial\Omega}v^2\dd s
\right\}.
\end{equation}

After normalization, Problem~\eqref{eq:Robin-problem} models
steady heat conduction in a body with a uniform unit heat
source, with the ambient temperature taken as the reference
zero. The Robin boundary condition models convective heat
exchange with the surroundings, with the heat flux proportional
to the temperature difference across the boundary; see
\cite[Section~1]{MarusicPalokaPazanin2024}. The same problem arises
in steady unidirectional flow through a straight channel with Navier
slip at the wall. After normalization of the pressure gradient and
viscosity, $\T_\beta$ is the volume flux, and the slip length is
$\ell=\beta^{-1}$; see \cite{KeadyWiwatanapataphee2016}.

Constant-Laplacian problems with homogeneous Robin conditions were
studied systematically by Keady and McNabb
\cite{KeadyMcNabb1993}. Analytic and geometric properties of the Robin torsion function,
such as qualitative behavior, concavity, and critical-point
structure, have been studied in the literature; see, for example,
\cite{vandenBergBucur2014,Sannipoli2022,Sakaguchi1990,
CrastaFragala2021,DeRegibusGrossi2024,LiWeiYao2025}.
For a broader overview of Robin problems and related shape
optimization questions, we refer to the comprehensive survey
chapter by Bucur, Freitas, and Kennedy~\cite{BucurFreitasKennedy2017}. Recent geometric estimates for Robin torsion and the first Robin
eigenvalue were obtained in
\cite{BarbatoMasielloSannipoli2026}, and inequalities relating
these two quantities were studied by Buttazzo, Cito, and Solombrino
\cite{ButtazzoCitoSolombrino2026}. 

Shape optimization for Robin torsional rigidity has received
considerable attention. For every fixed $\beta>0$ and every bounded open Lipschitz set $\Omega\subset \mathbb{R}^{n}$, the Robin
Saint--Venant inequality states that
\begin{align}
\label{eq:SV}
\T_\beta(\Omega)\le\T_\beta(\Omega^\sharp),
\end{align}
with equality if and only if $\Omega$ is a ball, where $\Omega^\sharp$ is a ball with $|\Omega^\sharp|=|\Omega|$.
Bucur and Giacomini established this inequality through a
free-discontinuity approach in a setting that also includes domains
beyond the Lipschitz class
\cite{BucurGiacominiFreeDiscontinuity2015}; see also
\cite{BucurGiacomini2015}.
Alvino, Nitsch, and Trombetti~\cite{AAT23} established
Talenti-type comparisons using rearrangement methods,
yielding another proof of \eqref{eq:SV} in the planar case.
Masiello and Paoli \cite{MasielloPaoli2023} subsequently proved
rigidity for the planar pointwise Talenti comparison: equality can
occur only for disks.
Quantitative stability of \eqref{eq:SV} was established
by Bucur, Giacomini, and Nahon
\cite{BucurGiacominiNahon2021} using a free-discontinuity
approach, with an alternative proof via quantitative
Talenti-type comparisons given by
Amato, Barbato, Cito, Masiello, and Paoli~\cite{AmatoBarbatoCitoMasielloPaoli2026}.

An additional constraint on the number of sides leads to a
different geometric problem: for prescribed area and a fixed
$\beta>0$, does the regular $N$-gon maximize $\T_\beta$ among
$N$-gons? Its Dirichlet counterpart belongs to the classical
P\'olya--Szeg\H{o} problems for polygons
\cite{Polya1948,PolyaSzego1951}; for recent work on this problem
and related polygonal shape optimization questions, see
\cite{BucurFragala2021,KeadyDirichlet2021,KeadyRegular2022,
BogoselBucur2024,DiazAvalosLaurain2026,GuiHuLi2026} and references therein. The disk comparison \eqref{eq:SV} does not settle
this question, since the disk lies outside the admissible class,
and the usual rearrangements need not preserve the number of sides.
Moreover, classical rearrangement arguments for the Dirichlet
problem do not carry over directly to the Robin setting, where
the boundary trace is generally nonconstant and the boundary
term in the energy requires additional treatment; see
\cite{AAT23}.
A quantitative version further asks for a lower bound on the
torsional deficit in terms of geometric quantities that measure
the departure from the regular polygon. We consider this problem in the class of tangential polygons.

A convex polygon is called \emph{tangential} if it admits an
incircle tangent to all of its sides.
Every nondegenerate triangle is tangential, and for $N\ge4$
the tangential class includes some nonsymmetric convex $N$-gons. 
Related studies of torsional rigidity include explicit
calculations for the Robin problem on the equilateral triangle
\cite{Lekner2009} and work by Keady on Dirichlet torsion
and Robin torsion in tangential and regular polygons
\cite{KeadyDirichlet2021,KeadyRegular2022,KeadySlip2021}.

The direct starting point of the present work is the quantitative
Dirichlet theorem of Gui, Hu, and Li~\cite{GuiHuLi2026}. They prove
that the regular $N$-gon uniquely maximizes Dirichlet torsional
rigidity among tangential $N$-gons of prescribed area, with an
explicit lower bound for the deficit in terms of the perimeter
ratio. Their full estimate also contains a nonnegative angular
deficit. The same method yields an explicit positive gap between
the torsional rigidities of equal-area regular polygons with
consecutive numbers of sides. We now establish a Robin counterpart
for every $\beta>0$. Our estimate contains the same geometric
term as their Dirichlet perimeter bound, together with an
additional positive term depending on $\beta$.

For a planar Lipschitz domain $\Omega$, we write
$L(\Omega):=\mathcal H^1(\partial\Omega)$ for its perimeter. Our main result is the following. For $N\ge3$ and $A>0$, let
$R_{N,A}$ denote the regular $N$-gon of area $A$.

\begin{theorem}
\label{thm:perimeter-deficit}
Let $N\ge3$ and $\beta>0$. Let $P$ be a nondegenerate
tangential $N$-gon of area $A$, and let $R_{N,A}$ be
the regular $N$-gon of area $A$. Let $\mathcal{T}_{\beta}(\cdot)$ be the Robin torsional rigidity.
Writing $L(\cdot)$ for perimeter, we have
{\small
\begin{equation}\label{eq:perimeter-deficit}
\begin{aligned}
\T_\beta(R_{N,A})-\T_\beta(P)
\ge{}&
\frac{A^2}{8N\tan(\pi/N)}
\left(1-\frac{L(R_{N,A})^2}{L(P)^2}\right)^2\\
&+\frac{A^2}{2\beta L(R_{N,A})}
\left(1-\frac{L(R_{N,A})}{L(P)}\right)^2
\left(2+\frac{L(R_{N,A})}{L(P)}\right).
\end{aligned}
\end{equation}}
Moreover, $\mathcal{T}_{\beta}(R_{N,A})=\mathcal{T}_{\beta}(P)$ if and only if $P$ is regular.
\end{theorem}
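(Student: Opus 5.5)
The plan is to compare $P$ and $R_{N,A}$ through a decomposition into mixed triangular cells centered at the incenter. We use a lower bound for $\T_\beta(R_{N,A})$ from an explicit test function and an upper bound for $\T_\beta(P)$ from cell relaxation. Let $P$ have inradius $r$, incenter $O$, perimeter $L=L(P)$, so $A=rL/2$. Write $L_R=L(R_{N,A})$, $r_R=2A/L_R$ and $\rho=L_R/L\in(0,1]$; by the isoperimetric property of the regular $N$-gon among tangential $N$-gons, $\rho\le 1$ and $r\le r_R$.

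\emph{Lower bound for the regular polygon.} In \eqref{eq:variational-global} on $R_{N,A}$ we insert $v=u_D+c$, where $u_D$ is the Dirichlet torsion function and $c$ is a constant. Optimizing in $c$ gives
\[
\T_\beta(R_{N,A})\ \ge\ \T_D(R_{N,A})+\frac{A^2}{\beta L_R}.
\]
The Robin term in \eqref{eq:perimeter-deficit} expands as
\[
\frac{A^2}{2\beta L_R}(1-\rho)^2(2+\rho)=\frac{A^2}{\beta L_R}-\frac{3A^2}{2\beta L}+\frac{A^2L_R^2}{2\beta L^3}.
\]
It therefore suffices to prove the upper bound
\begin{equation}\label{eq:plan-upper}
\T_\beta(P)\ \le\ \T_D(R_{N,A})-\frac{A^2}{8N\tan(\pi/N)}\bigl(1-\rho^2\bigr)^2+\frac{A^2}{2\beta}\Bigl(\frac{3}{L}-\frac{L_R^2}{L^3}\Bigr).
\end{equation}

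\emph{Cell relaxation for $P$.} Join $O$ to the vertices and to the tangency points. This splits $P$ into $2N$ right triangles $C_j$, each with leg $r$ on the side of $P$ and half-angle $\alpha_j$ at $O$, where $\sum_j\alpha_j=\pi$. Dropping the continuity of $v$ across the interior cuts only enlarges the admissible class in \eqref{eq:variational-global}. Hence $\T_\beta(P)\le\sum_j \tau_\beta(C_j)$, where $\tau_\beta(C_j)$ is the torsion of the mixed problem with Robin data on the outer leg and Neumann data on the two cuts. For the regular polygon this is an equality by symmetry.

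Scaling gives $\tau_\beta(C_j)=r^4\,g(\alpha_j,\beta r)$. Each cell is then handled in two steps.
\begin{itemize}
\item[(i)] Split $g$ into a Dirichlet-type part and a Robin correction. The Dirichlet part is estimated as in \cite{GuiHuLi2026}: concavity of the angular profile, obtained from the spectral estimates, together with Jensen's inequality over $\sum\alpha_j=\pi$, produces $\T_D(R_{N,A})$ minus the perimeter deficit $(1-\rho^2)^2A^2/(8N\tan(\pi/N))$.
\item[(ii)] For the Robin correction we use the complementary (dual) principle on each cell,
\[
\tau_\beta(C)=\min\Bigl\{\int_C|\sigma|^2+\frac1\beta\int_{\text{outer leg}}(\sigma\cdot\nu)^2\ :\ -\operatorname{div}\sigma=1,\ \sigma\cdot\nu=0\text{ on the cuts}\Bigr\}.
\]
We choose $\sigma=t\,\nabla w_C+(1-t)\,\sigma_0$, where $w_C$ is the Dirichlet–Neumann cell torsion function and $\sigma_0=-\tfrac12(\text{projection of }x\text{ onto the direction normal to the outer leg})$. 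The field $\sigma_0$ is divergence-admissible, has constant flux $-r/2$ through the outer leg, and has zero flux on the cuts.
\end{itemize}

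Summing (ii) over the cells, the boundary term becomes $\frac1\beta\sum(\text{flux})^2/|\text{leg}|$. Its linear part is $\frac{1}{\beta}\,\frac{r^2L}{4}=\frac{A^2}{\beta L}$. Optimizing in $t$, and using $r\le r_R$ through the quantitative scaling estimate (comparing $P$ with the dilate of $R_{N,A}$ having inradius $r$), should yield exactly the Robin bound $\frac{A^2}{2\beta}\bigl(3/L-L_R^2/L^3\bigr)$ in \eqref{eq:plan-upper}. The cubic structure $3/L-L_R^2/L^3$ is what a one-parameter optimization of a quadratic-in-$t$ bound produces.

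The main obstacle is step (ii): keeping the Dirichlet deficit intact while extracting the sharp Robin term. The cross term between $\nabla w_C$ and $\sigma_0$ must be controlled by area and perimeter alone, with no dependence on the individual angles. I would first try to exploit the identity $\int_C\nabla w_C\cdot\sigma_0=\int_C w_C$, obtained by integrating by parts with $\operatorname{div}\sigma_0=-1$ and the vanishing traces on the cuts.

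For equality, note that the Robin term vanishes only if $\rho=1$. By the isoperimetric inequality for tangential polygons, $\rho=1$ forces $P$ to be regular, and the converse is trivial.
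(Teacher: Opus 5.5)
Your reduction is to an inequality that is false, so the plan cannot work as structured. The test function $v=u_D+c$ does give $\T_\beta(R_{N,A})\ge\T_D(R_{N,A})+A^2/(\beta L_R)$, but this is \emph{strict} for every polygon. The functional in \eqref{eq:variational-global} is strictly concave, so equality would force $u_{R_{N,A},\beta}=u_D+A/(\beta L_R)$, i.e.\ $\partial_\nu u_D\equiv -A/L_R$ on every side. Place one side on $\{y=0\}$ with the polygon in $\{y>0\}$ and set $k=A/L_R$. The harmonic function $u_D-(ky-y^2/2)$ would then have zero Cauchy data on that side, hence vanish identically. That is impossible, because $u_D=0$ on the adjacent, non-horizontal sides. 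Now evaluate your target upper bound at $P=R_{N,A}$: $\rho=1$, the Dirichlet deficit vanishes, and $\frac{A^2}{2\beta}\bigl(\frac3L-\frac{L_R^2}{L^3}\bigr)=\frac{A^2}{\beta L_R}$. It therefore asserts $\T_\beta(R_{N,A})\le\T_D(R_{N,A})+A^2/(\beta L_R)$, which contradicts the strict inequality. The loss $\T_\beta(R_{N,A})-\T_D(R_{N,A})-A^2/(\beta L_R)$ is a fixed positive number, while the right-hand side of \eqref{eq:perimeter-deficit} tends to $0$ as $L(P)\to L_R$. By continuity of $\T_\beta$, your reduced inequality therefore fails for every tangential $P$ close to regular, and no choice of fields in step (ii) can rescue it. Any workable comparison must be exact at the regular polygon.

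Step (ii) also has concrete defects. The field $-\frac12((x-O)\cdot e)e$ has divergence $-\frac12$, and it has nonzero flux through the cut joining $O$ to the vertex; the field with the properties you list is the radial field $-\frac12(x-O)$. In an upper bound you may not replace $\frac1\beta\int_{\text{leg}}(\sigma\cdot\nu)^2$ by $\frac1\beta(\text{flux})^2/|\text{leg}|$. Cauchy--Schwarz gives the opposite inequality, and for $t\neq0$ the density $t\,\partial_\nu w_C-(1-t)r/2$ is not constant. With $t=0$ you only get $\T_\beta(P)\le\frac14\int_P|x-O|^2\dd x+A^2/(\beta L)$, whose first term strictly exceeds $\T_D(P)$. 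For $t\neq0$ the Jensen deficit is not preserved, and ``should yield exactly'' is not a derivation.

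The missing idea is to keep both polygons in the same exact cell family. \Cref{prop:relaxation} is an equality for $R_{N,A}$, so
$\T_\beta(R_{N,A})-\T_\beta(P)\ge 2r^4\bigl[Nz^2h_{b\sqrt z}(a_*)-\sum_ih_b(a_i)\bigr]$, with $b=\beta r$ and $z=L(P)^2/L_R^2$, and nothing is lost at the regular polygon. Two ingredients then finish the proof.
\begin{itemize}
\item[(a)] Strict concavity of $\alpha\mapsto h_b(\tan\alpha)-c_b\tan\alpha$, with the $b$-dependent coefficient $c_b=\frac18+\frac3{8b}$. This comes from the Robin spectral moments $\mu_b((0,\infty))=\frac{b+4}{48b}$ and $\int\lambda^{-1}\dd\mu_b=\frac1{48}$.
\item[(b)] The scaling inequality $2h_b+\frac b2\partial_bh_b\ge c_ba$. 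It follows from $h_b=G+bZ$, $\partial_bh_b=-Z$, $G\ge a/16$ and $Z\ge a/(4b^2)$, and is integrated along $s\mapsto s^2h_{b\sqrt s}(a_*)$ on $[1,z]$.
\end{itemize}
Step (b) is what handles the change $\beta r\mapsto\beta r\sqrt z$ of the effective Robin parameter. The cubic term arises as $\int_1^z\frac{3\sqrt s}{8b}\dd s-\frac3{8b}(z-1)=\frac1{8b}(2z^{3/2}-3z+1)$, not from optimizing a quadratic in $t$.
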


For fixed $A$, $N$, and $\beta$, the lower bound depends only on
the perimeter ratio. Both terms are nonnegative, and the second
is strictly positive for every nonregular tangential polygon.
The first term agrees with the Dirichlet perimeter deficit in
\cite{GuiHuLi2026}. As $\beta\to\infty$, the second term vanishes
and Robin torsional rigidity converges to Dirichlet torsional
rigidity, recovering that estimate; see \cref{sec:large-small discussion}. 
Thus \eqref{eq:perimeter-deficit} contains an explicit additional
nonnegative contribution for every finite positive Robin parameter,
which is strictly positive whenever $P$ is nonregular. Immediately, we have the following corollaries.

\begin{corollary}\label{cor:maximizer}
Let $\mathcal{T}_{\beta}(\cdot)$ be the Robin torsional rigidity. For every $N\ge3$, $A>0$, and $\beta>0$, the regular $N$-gon of area $A$
uniquely maximizes $\T_\beta$ among tangential $N$-gons of area
$A$, up to rigid motions.
\end{corollary}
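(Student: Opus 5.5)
The corollary follows directly from \cref{thm:perimeter-deficit}, so the plan is a short reduction.

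Fix $N\ge3$, $A>0$ and $\beta>0$, and let $P$ be any nondegenerate tangential $N$-gon of area $A$.

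\emph{Step 1: the bound is nonnegative.} By the isoperimetric property of the regular polygon among $N$-gons of fixed area, $L(P)\ge L(R_{N,A})$. Set $t:=L(R_{N,A})/L(P)\in(0,1]$. The right-hand side of \eqref{eq:perimeter-deficit} then reads
\[
\frac{A^2}{8N\tan(\pi/N)}(1-t^2)^2+\frac{A^2}{2\beta L(R_{N,A})}(1-t)^2(2+t),
\]
and both terms are visibly nonnegative. Applying \cref{thm:perimeter-deficit} therefore gives $\T_\beta(P)\le\T_\beta(R_{N,A})$, so $R_{N,A}$ is a maximizer in the class.

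\emph{Step 2: uniqueness.} Suppose $\T_\beta(P)=\T_\beta(R_{N,A})$. By the equality clause of \cref{thm:perimeter-deficit}, $P$ is regular. A regular $N$-gon is determined up to rigid motions by its area, so $P$ is congruent to $R_{N,A}$.

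Alternatively, one can argue uniqueness without the equality clause. Equality forces the Robin term to vanish, which gives $t=1$, hence $L(P)=L(R_{N,A})$. Equality in the polygonal isoperimetric inequality then forces $P$ to be regular.

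The only subtlety is the reduction to the class of the theorem. Degenerate "tangential $N$-gons" (with fewer genuine sides) should be excluded by the word "nondegenerate." Rigid motions preserve $\T_\beta$ because the problem \eqref{eq:Robin-problem} is invariant under isometries. There is no real obstacle beyond invoking the theorem.
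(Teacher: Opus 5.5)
Your proposal is correct and matches the paper, which likewise obtains the corollary immediately from \cref{thm:perimeter-deficit} and its equality case. You do not need the polygonal isoperimetric inequality, because both terms of the bound are nonnegative for every $t>0$; moreover, within the tangential class, $L(P)\ge L(R_{N,A})$ with equality only for regular $P$ is already contained in \cref{lem:geometry-ratio}.
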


Since every nondegenerate triangle has an incircle,
\cref{thm:perimeter-deficit} also applies to the full class of triangles.

\begin{corollary}\label{cor:triangle}
Let $\mathcal{T}_{\beta}(\cdot)$ be the Robin torsional rigidity. Let $T$ be a nondegenerate triangle of area $A$, and let $R_{3, A}$
be the equilateral triangle of area $A$. For every $\beta>0$,
\[
\T_\beta(T)\le\T_\beta(R_{3, A}),
\]
with equality if and only if $T$ is equilateral. An explicit
deficit estimate is obtained from \eqref{eq:perimeter-deficit}
by setting $N=3$.
\end{corollary}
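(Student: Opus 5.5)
The statement to prove is \cref{cor:triangle}, and I would derive it directly from \cref{thm:perimeter-deficit} with $N=3$, without re-entering its proof.

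First, every nondegenerate triangle $T$ is tangential: its three interior angle bisectors meet at a point equidistant from all three sides. So $T$ is a nondegenerate tangential $3$-gon of area $A$, and \eqref{eq:perimeter-deficit} applies with $P=T$ and $R_{3,A}$ the equilateral triangle of area $A$.

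For the inequality, I would apply the isoperimetric inequality for triangles: among triangles of fixed area, the equilateral one uniquely minimizes perimeter. This follows from Heron's formula and the AM--GM inequality. Hence $\rho:=L(R_{3,A})/L(T)\in(0,1]$.

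With this, both terms on the right of \eqref{eq:perimeter-deficit} are nonnegative:
\begin{itemize}
\item the first is a square times a positive constant;
\item the second is $\frac{A^2}{2\beta L(R_{3,A})}(1-\rho)^2(2+\rho)\ge 0$.
\end{itemize}
Therefore $\T_\beta(T)\le\T_\beta(R_{3,A})$.

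For the equality case, I would simply invoke the final sentence of \cref{thm:perimeter-deficit}: equality holds if and only if $P$ is regular, and a regular triangle is an equilateral triangle. Alternatively, equality forces the second term to vanish. Since $2+\rho>0$ and $\beta>0$, this gives $\rho=1$, and the equality case of the triangle isoperimetric inequality then yields that $T$ is equilateral.

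Substituting $N=3$, $\tan(\pi/3)=\sqrt3$, and $L(R_{3,A})=2\cdot3^{3/4}\sqrt{A}$ into \eqref{eq:perimeter-deficit} gives the explicit deficit estimate. No step is a real obstacle. The only point needing care is that $L(T)\ge L(R_{3,A})$, which fixes the signs, and this is classical.
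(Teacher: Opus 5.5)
Your proposal is correct and matches the paper's proof, which notes that every nondegenerate triangle is tangential and applies \cref{thm:perimeter-deficit} with $N=3$ and $\tan(\pi/3)=\sqrt3$, including its equality statement. Your appeal to the triangle isoperimetric inequality is harmless but unnecessary: both terms in \eqref{eq:perimeter-deficit} are nonnegative for every ratio $\rho>0$, and the bound $L(P)\ge L(R_{N,A})$ is already supplied by \cref{lem:geometry-ratio}.
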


The fixed-area Robin torsion problem for triangles was explicitly
recorded as open in
\cite[Appendix~A.1.2]{KeadyWiwatanapataphee2016}. The case $N=3$ of our theorem gives not
only the maximizer but an explicit stability estimate.
For $N=4$, \cref{thm:perimeter-deficit} applies to tangential quadrilaterals and does
not assert the corresponding result for arbitrary quadrilaterals.

For the Dirichlet problem, Gui, Hu, Li, and
Zhang~\cite{GuiHuLiZhang2026} proved that the torsional rigidity
of equal-area regular polygons increases strictly with the
number of sides, using explicit estimates for a series obtained
from Schwarz--Christoffel maps and Bergman analytic content. Shortly thereafter, Gui, Hu, and Li \cite{GuiHuLi2026} provided a rather brief alternative proof of the strict monotonicity of torsional rigidity with respect to $N$.
Based on the approach in \cite{GuiHuLi2026}, our angular and scaling estimates yield the following quantitative comparison in the Robin case.
\begin{theorem}
\label{thm:N-monotonicity}
Fix $A,\beta>0$. For each integer $N\ge3$, let $R_{N,A}$
be the regular $N$-gon of area $A$. Let $\mathcal{T}_{\beta}(\cdot)$ be the Robin torsional rigidity. Writing $L(\cdot)$ for perimeter, we have
{\small
\begin{equation}\label{eq:N-monotonicity-quantitative}
\begin{aligned}
\T_\beta(R_{N+1,A})-\T_\beta(R_{N,A})&\quad\ge
\frac{A^2}{8(N+1)\tan(\pi/(N+1))}
\left(1-\frac{L(R_{N+1, A})^2}{L(R_{N, A})^2}\right)^2\\
&\qquad+
\frac{A^2}{2\beta L(R_{N+1, A})}
\left(1-\frac{L(R_{N+1, A})}{L(R_{N, A})}\right)^2
\left(2+\frac{L(R_{N+1, A})}{L(R_{N, A})}\right).
\end{aligned}
\end{equation}}
\end{theorem}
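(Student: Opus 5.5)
Theorem~\ref{thm:N-monotonicity} will follow the same scheme as \cref{thm:perimeter-deficit}: decompose $R_{N+1,A}$ (or a polygon built from $R_{N,A}$) into triangular cells and use a competitor in the variational principle \eqref{eq:variational-global}. The inequality has exactly the shape of \eqref{eq:perimeter-deficit} with $N$ replaced by $N+1$ and $P$ replaced by $R_{N,A}$. The natural plan is therefore to prove a slightly more general comparison, valid for any tangential polygon $P$ whose number of sides is \emph{at most} $N+1$ and whose area is $A$. A tangential $N$-gon can be viewed as a degenerate tangential $(N+1)$-gon by inserting a zero-length side, i.e.\ one tangent point with zero angular weight.

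\textbf{Step 1: cell decomposition.} Write $P$ as a union of triangular cells with apex at the incenter, with Robin condition on the polygon side and Neumann condition on the two radial sides. Each cell is determined by its half-angle $\theta_i$ at the apex and the inradius $r$, with $\sum_i \theta_i=\pi$. Also $A=r^2\sum_i\tan\theta_i$ and $L(P)=2r\sum_i\tan\theta_i=2A/r$. The torsion of $P$ is then bounded above by the sum of the mixed Robin--Neumann cell torsions, since restriction to cells only relaxes the problem.

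\textbf{Step 2: angular concavity.} Express each cell torsion as $r^4 f_\beta(\theta)$, with $\beta$ rescaled as $\beta r$. Use the angular concavity estimate (the spectral argument in the paper) to get $\sum_i f(\theta_i)\le (N+1) f(\pi/(N+1))$. This requires $f(0)=0$, so that degenerate cells are allowed; with that, the inequality holds for an $N$-gon counted as an $(N+1)$-gon. Combined with Step 1, this shows that $\T_\beta(P)$ is at most the torsion of the regular $(N+1)$-gon with the same inradius $r$. That polygon has area $A'=r^2(N+1)\tan(\pi/(N+1))\le A$, with $A'/A=L(R_{N+1,A})/L(P)$.

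\textbf{Step 3: quantitative scaling.} Compare the regular $(N+1)$-gon of inradius $r$ with $R_{N+1,A}$ of larger inradius $r_*$. Dilating $u$ for the smaller polygon by $t=r_*/r\ge1$ gives a competitor on $R_{N+1,A}$. Under scaling, the Dirichlet-part torsion grows like $t^4$ and the Robin boundary part like $t^3$, with $\beta$ fixed. Expanding this yields a gap involving $(1-s^2)^2$ and $(1-s)^2(2+s)$, where $s=L(R_{N+1,A})/L(P)$. This is the scaling lemma already used for \cref{thm:perimeter-deficit}. Taking $P=R_{N,A}$ finishes the proof.

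\textbf{Main obstacle.} The hard part is justifying that the angular concavity applies with a degenerate zero-angle cell. That needs $f_\beta(0)=0$ and concavity of $f_\beta$ on the closed interval $[0,\pi/2)$, uniformly in the Robin parameter. I would first check that the spectral concavity estimate of the paper holds on the closed interval by continuity. Failing that, I would use a direct argument: split a vertex of $R_{N,A}$ by cutting a tiny corner to create a genuine tangential $(N+1)$-gon, apply \cref{thm:perimeter-deficit}, and pass to the limit using continuity of $\T_\beta$ and $L$ under Hausdorff convergence of convex sets.
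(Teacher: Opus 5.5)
Your overall plan is the paper's: regard $R_{N,A}$ as a degenerate tangential $(N+1)$-gon with half-angles $0,\pi/N,\dots,\pi/N$, then combine angular concavity with a scaling estimate. However, Step~2 is false as stated, and the idea that repairs it is missing.

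\textbf{Step 2 fails.} The raw cell energy $\theta\mapsto h_b(\tan\theta)$ is not concave. The competitor $v=\frac{1-x^2}{4}+\frac1{2b}$ in \eqref{eq:def-h} gives $h_b(a)\ge(\frac1{16}+\frac1{4b})a$. Hence $h_b(\tan\theta)\to\infty$ as $\theta\uparrow\pi/2$, and no concave function on a bounded interval can do that, since it lies below its supporting lines.

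The conclusion of Step~2 also fails. For $P=R_{N,A}$ your Jensen step reads $Nh_b(a_N)\le(N+1)h_b(a_{N+1})$, with $a_k=\tan(\pi/k)$, $S_k=ka_k$, $b=\beta r$. By \eqref{eq:regular-value} this says the regular $N$-gon of inradius $r$ has no more torsion than the regular $(N+1)$-gon of the same inradius. But $L=2|P|/r$ for tangential polygons, so \eqref{eq:small-beta-torsion} gives $\T_\beta=r|P|/(2\beta)+O(1)$ as $\beta\downarrow0$. Since the $N$-gon has the larger area $r^2S_N>r^2S_{N+1}$, it has strictly larger torsion for small $\beta$.

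\textbf{What the paper does instead.} \cref{prop:angular-curvature} gives concavity only of $g_b(\theta)=h_b(\tan\theta)-c_b\tan\theta$, with $c_b=\frac18+\frac3{8b}$. Jensen for $g_b$ then yields only
$Nh_b(a_N)\le(N+1)h_b(a_{N+1})+c_b(S_N-S_{N+1})$.
This leaves a positive excess $(N+1)a_{N+1}c_b(z-1)$, with $z=S_N/S_{N+1}$, which is linear in $z-1$. The scaling step must consume exactly this excess. \cref{pro:scaling-deficit} shows
$z^2h_{b\sqrt z}(a)-h_b(a)\ge c_ba(z-1)+\frac a{16}(z-1)^2+\frac a{8b}\rho(z)$
with the \emph{same} $c_b$. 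This comes from $2h_b+\frac b2\partial_bh_b=2G+\frac{3b}{2}Z\ge c_ba$, where $G=\int_{D_a}|\nabla u|^2\ge a/16$ and $Z=\int_{\Gamma_a}u^2\ge a/(4b^2)$.

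The quadratic terms $(1-s^2)^2$ and $(1-s)^2(2+s)$ are what survive this cancellation. A bare dilation comparison as in your Step~3 gives a first-order gain $c_ba(z-1)+O((z-1)^2)$, so it does not expand into those terms by itself. (Also $A'/A=s^2$, not $s$.)

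\textbf{The endpoint issue is minor.} By \eqref{eq:h-representation}, $0\le h_b(a)\le(\frac1{12}+\frac1{3b})a$. So $g_b(0^+)=0$, and the extension of $g_b$ to $[0,\pi/2)$ stays concave. No uniformity in $b$ is needed, since $b=\beta r_{N,A}$ is fixed during the Jensen step.

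\textbf{Your fallback does work, if set up correctly.}
\begin{itemize}
\item The cut must keep the polygon tangential, and a generic small corner cut does not. Tangent lines to the incircle near the vertex bisector cut off a triangle of fixed size. You must instead cut along tangent lines at points approaching one of the two tangency points next to a vertex; these remove thin slivers of vanishing area.
\item After rescaling to area $A$, these are tangential $(N+1)$-gons $\tilde P_\epsilon\to R_{N,A}$, and \cref{thm:perimeter-deficit} applies to them with $N+1$ sides.
\item The right-hand side is continuous in $L(P)$, so you only need $\T_\beta(R_{N,A})\le\liminf_{\epsilon\to0}\T_\beta(\tilde P_\epsilon)$. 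One way is to test \eqref{eq:variational-global} on $\tilde P_\epsilon$ with an extension of the Robin torsion function of $R_{N,A}$ (which is continuous up to the boundary), and use convergence of the boundary measures.
\end{itemize}
That route uses the main theorem as a black box, at the cost of a semicontinuity lemma. The paper's proof needs no limit, because \eqref{eq:regular-value} is exact for both regular polygons.
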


The first term again agrees with the quantitative Dirichlet
gap in \cite{GuiHuLi2026}, while the second gives a positive
Robin contribution. In particular, for fixed area and a fixed
positive Robin parameter, torsional rigidity increases strictly
with the number of sides of the regular polygon.

We now outline the proof. Let $P$ be a nondegenerate tangential
$N$-gon with inradius $r$, interior angles $\pi-2\alpha_i$,
and $a_i=\tan\alpha_i$. Then
\begin{equation*}
|P|=r^2\sum_{i=1}^N a_i,
\qquad
\sum_{i=1}^N\alpha_i=\pi.
\end{equation*}

Following the mixed-cell method of
Gui, Hu, and Li~\cite{GuiHuLi2026}, we decompose $P$ into
$2N$ right triangles by joining the incenter to the vertices
and tangency points. For each $i$, two cells are congruent
to $rD_{a_i}$, where
\[
D_a=\{(x,y):0<x<1,\ 0<y<ax\}.
\]
Let $h_b(a):=\T_b^{\mathrm{Mix}}(D_a)$ denote the cell energy
defined in \eqref{eq:def-h}, with Robin parameter $b$ on
the vertical side and homogeneous Neumann conditions on
the other two sides. Relaxing the matching conditions across
the internal cuts and using the scaling law
gives
\[
\T_\beta(P)\le 2r^4\sum_{i=1}^N h_{\beta r}(a_i),
\]
with equality for regular
polygons by reflection symmetry, see \cref{prop:relaxation}.

The comparison proceeds in two steps. First, angular concavity
compares the corrected cell energies at fixed $b=\beta r$
with those at equal angles. We then adjust the inradius
to recover the prescribed area. In the Robin setting,
this changes both the factor $r^4$ and the normalized
parameter $b=\beta r$, even though $\beta$ remains fixed. A scaling
estimate controls these changes and completes the comparison
with the regular polygon of the same area.

The two estimates are linked by the same coefficient
\[
c_b:=\frac18+\frac{3}{8b}.
\]
First, for every $b>0$, we prove that
\[
\alpha\longmapsto h_b(\tan\alpha)-c_b\tan\alpha
\quad\text{is strictly concave on }(0,\pi/2).
\]
Together with $\sum_i\alpha_i=\pi$, this allows us to apply
Jensen's inequality to compare the corrected cell energies
with those at equal angles. Second, for every $\alpha\in(0,\pi/2)$ and $b>0$,
we prove the differential scaling estimate
\[
2h_b(\tan\alpha)
+\frac b2\partial_b h_b(\tan\alpha)
\ge c_b\tan\alpha.
\]
This estimate controls the variation of the scaled cell energy
and, together with angular concavity, yields the quantitative
comparison with the regular polygon of the same area.

The paper is organized as follows. In \cref{sec:geometry}, we
introduce the angular coordinates and prove the mixed-cell
relaxation. In \cref{sec:cell-representation}, we derive the
spectral representation and study the dependence of the cell
energy on the Robin parameter. \cref{sec:quantitative comparison} establishes the angular concavity and scaling
deficit estimates and proves
\cref{thm:perimeter-deficit,cor:maximizer,cor:triangle}. In \cref{sec:N-monotonicity-short}, we prove
\cref{thm:N-monotonicity}. Finally, \cref{sec:large-small discussion}
discusses the Dirichlet and small-$\beta$ limits.

\section{Tangential polygons and mixed Robin cells}\label{sec:geometry}

Let $P$ be a nondegenerate tangential $N$-gon with incenter
$O$ and inradius $r$. Write its $i$-th interior angle as
$\pi-2\alpha_i$. Then
\begin{equation*}
0<\alpha_i<\frac{\pi}{2}
\quad\text{and}\quad
\sum_{i=1}^N\alpha_i=\pi,
\end{equation*}
where the second identity follows from the angle-sum formula for convex $N$-gons. Joining $O$ to the vertices and the tangency points decomposes $P$
into $2N$ right triangles. The two triangles adjacent to the $i$-th
vertex have legs $r$ and $r\tan\alpha_i$. Accordingly, set
\begin{equation*}
a_i:=\tan\alpha_i,
\qquad
S:=\sum_{i=1}^N a_i.
\end{equation*}
Summing the areas and boundary edges of these triangles gives
\begin{equation*}
|P|=r^2S,
\qquad
L(P)=2rS.
\end{equation*}

For later use, set
\[
a_*:=\tan\frac\pi N,
\qquad
S_*:=Na_*.
\]
The following elementary identities connect the angular variable to
the perimeter deficit.

\begin{lemma}\label{lem:geometry-ratio}
Let $P$ be a nondegenerate tangential $N$-gon of area $A$, with inradius $r$ and perimeter $L(P)$. Let
$R_{N,A}$ be the regular $N$-gon of area $A$, with inradius
$r_{N,A}$. Set
\(
z:=S/ S_*
\).
Then
\begin{equation}\label{eq:geometry-ratio}
z\ge1,
\qquad
r_{N,A}=r\sqrt z
\qquad\text{and}\quad
z=\frac{L(P)^2}{L(R_{N,A})^2}.
\end{equation}
Moreover, $z=1$ if and only if $P$ is regular.
\end{lemma}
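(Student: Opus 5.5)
The plan is to read all three claims off the area and perimeter formulas $|P|=r^2S$ and $L(P)=2rS$ just derived, together with convexity of $\tan$ on $(0,\pi/2)$.

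\emph{Step 1: $z\ge1$.} Since $\tan$ is strictly convex on $(0,\pi/2)$ and $\sum_i\alpha_i=\pi$, Jensen's inequality gives
\[
S=\sum_{i=1}^N\tan\alpha_i\ge N\tan\Bigl(\frac1N\sum_{i=1}^N\alpha_i\Bigr)=N\tan\frac\pi N=S_*,
\]
so $z\ge1$. By strict convexity, equality holds if and only if all $\alpha_i$ equal $\pi/N$.

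\emph{Step 2: the regular polygon.} $R_{N,A}$ is tangential with all $\alpha_i=\pi/N$, so its $S$-value is $S_*$. Applying the area formula to it gives $A=r_{N,A}^2S_*$. Applying it to $P$ gives $A=r^2S$. Hence $r_{N,A}^2=r^2S/S_*=r^2z$, that is, $r_{N,A}=r\sqrt z$.

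\emph{Step 3: the perimeter ratio.} From the perimeter formula, $L(P)=2rS$ and $L(R_{N,A})=2r_{N,A}S_*$. Then
\[
\frac{L(P)^2}{L(R_{N,A})^2}=\frac{r^2S^2}{r^2 z\,S_*^2}=\frac{z^2}{z}=z .
\]

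\emph{Step 4: equality case.} By Step 1, $z=1$ exactly when all angles $\pi-2\alpha_i$ equal $\pi-2\pi/N$. It remains to show that a tangential polygon with all angles equal is regular. Each side is tangent to the incircle and has length $r(\tan\alpha_{i}+\tan\alpha_{i+1})$, a sum of two tangent segments. With all $\alpha_i$ equal, every side equals $2r\tan(\pi/N)$, so $P$ is equiangular and equilateral, hence regular. Conversely, a regular polygon has all $\alpha_i=\pi/N$, so $z=1$.

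No step poses a real obstacle. The only point needing care is justifying strict convexity of $\tan$ for the equality case in Jensen's inequality, and this is immediate from $(\tan)''=2\sec^2\alpha\tan\alpha>0$ on $(0,\pi/2)$.
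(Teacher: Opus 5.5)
Your proposal is correct and follows the paper's proof essentially step for step. Both arguments use Jensen's inequality for the strictly convex $\tan$ under $\sum_i\alpha_i=\pi$, the identities $A=r^2S=r_{N,A}^2S_*$ and $L=2rS$, and, for the equality case, the fact that equal $\alpha_i$ force all tangent segments, hence all sides, to be equal.
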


\begin{proof}
Since the function $\tan\alpha$ is strictly convex on $(0,\pi/2)$ and $\sum_{i}\alpha_{i}=\pi$, Jensen's inequality gives
\[
S=\sum_{i=1}^N\tan\alpha_i
\ge N\tan\frac\pi N=S_*,
\]
with equality if and only if all $\alpha_i$ equal $\pi/N$. Thus
$z\ge1$. From $A=r^2S=r_{N,A}^2S_*$ we obtain
$r_{N,A}=r\sqrt z$. Finally,
\[
\frac{L(P)}{L(R_{N,A})}
=\frac{2rS}{2r_{N,A}S_*}=\sqrt z,
\]
which proves \eqref{eq:geometry-ratio}. If the angles are all equal,
every tangent segment has length $r\tan(\pi/N)$, so all sides are
equal and $P$ is regular. The converse is immediate.
\end{proof}

We now introduce the normalized right-triangle cell
\begin{equation}
 \begin{aligned}
 \label{def:Da}
 D_a&:=\{(x,y)\in\R^2:0<x<1,\ 0<y<ax\},\\
 \Gamma_a&:=\{1\}\times(0,a),
 \end{aligned}
 \qquad a>0.
\end{equation}
Thus, the two cells adjacent to the $i$-th vertex are congruent to $rD_{a_i}$, and the side corresponding to $\partial P$ is the dilated copy of $\Gamma_{a_i}$. For \(a, b, r>0\), define the mixed Robin cell energy on
\(rD_a\) by
\begin{equation}\label{eq:def-h}
 \mathcal T_b^{\mathrm{Mix}}(rD_a)
 :=
 \max_{v\in H^1(rD_a)}
 \left\{
  2\int_{rD_a}v\,\dd x
  -
  \int_{rD_a}|\nabla v|^2\,\dd x
  -
  b\int_{r\Gamma_a}v^2\,\dd s
 \right\}.
\end{equation}
Its unique maximizer is the weak solution of
\begin{equation}\label{eq:mixed-cell-pde}
 \begin{cases}
  -\Delta w=1
  & \text{in }rD_a,\\
  \partial_\nu w=0
  & \text{on }r(\partial D_a\setminus\Gamma_a),\\
  \partial_\nu w+b w=0
  & \text{on } r\Gamma_a.
 \end{cases}
\end{equation}
Since $D_a$ is a bounded Lipschitz domain, $\Gamma_a$ has positive length, and $b>0$, the standard variational theory yields a unique weak solution, equivalently, a unique maximizer of the functional. 

In the limit $b\to\infty$, the Robin condition on $\Gamma_a$ converges formally to a Dirichlet condition, while the other two sides retain homogeneous Neumann conditions. The resulting mixed Dirichlet--Neumann cell is precisely the cell used in \cite{GuiHuLi2026} for the Dirichlet torsion problem on tangential polygons. The finite-$b$ setting considered here introduces an additional difficulty: the cell energy depends on the Robin parameter $b$, which itself changes under geometric scaling. Quantitative control of this $b$-dependence is therefore essential.

Under dilation by \(r>0\), the mixed torsional rigidity satisfies
\begin{equation}\label{eq:cell-scaling}
 \mathcal T_\beta^{\mathrm{Mix}}(rD_a)
 =
 r^4\mathcal T_{\beta r}^{\mathrm{Mix}}(D_a).
\end{equation}
For simplicity, we write $h_b(a):=\mathcal T_b^{\mathrm{Mix}}(D_a)$. Thus,
\[
 \mathcal T_\beta^{\mathrm{Mix}}(rD_a)
 =
 r^4h_{\beta r}(a).
\]

\begin{proposition}\label{prop:relaxation}
 Let $P$ be a nondegenerate tangential $N$-gon with inradius $r$, and let $\T_{\beta}(P)$ be the Robin torsional rigidity of $P$. Write the $i$-th interior angle of $P$ as $\pi-2\alpha_{i}$ and set $a_{i}=\tan \alpha_{i}$, where $i=1, \ldots, N$. Let $h_{b}(a):=\T_{b}^{\mathrm{Mix}}(D_{a})$ be defined by \eqref{eq:def-h}. Then
 \begin{equation}\label{eq:relaxation}
  \T_\beta(P)
  \le
  2r^4\sum_{i=1}^Nh_{\beta r}(a_i).
 \end{equation}
 If $P$ is regular, then equality holds in \eqref{eq:relaxation}.
\end{proposition}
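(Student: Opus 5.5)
We prove \eqref{eq:relaxation} by restricting the global variational problem \eqref{eq:variational-global} to the $2N$ cells and relaxing the matching conditions along the internal cuts. Let $u=u_{P,\beta}$ be the Robin torsion function of $P$, so that
\[
\T_\beta(P)=2\int_P u\dd x-\int_P|\nabla u|^2\dd x-\beta\int_{\partial P}u^2\dd s .
\]
Join the incenter $O$ to the vertices and to the tangency points, which decomposes $P$ into right triangles $C_{i,1},C_{i,2}$, $i=1,\dots,N$. Each $C_{i,j}$ is the image of $rD_{a_i}$ under a rigid motion $\Phi_{i,j}$. This motion sends $r\Gamma_{a_i}$ onto the portion of $\partial P$ lying in $\overline{C_{i,j}}$, namely the tangent segment of length $ra_i$ from the tangency point to the $i$-th vertex. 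The remaining two sides of $C_{i,j}$, the segments from $O$ to the vertex and from $O$ to the tangency point, lie in the interior of $P$ apart from their endpoints. Since the cells have pairwise disjoint interiors and the cut segments have measure zero, all three integrals split additively:
\[
\T_\beta(P)=\sum_{i,j}\Bigl(2\int_{C_{i,j}}u-\int_{C_{i,j}}|\nabla u|^2-\beta\int_{\partial C_{i,j}\cap\partial P}u^2\dd s\Bigr).
\]

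Next we bound each cell term. The restriction $u|_{C_{i,j}}$ lies in $H^1(C_{i,j})$, and its trace on $\partial C_{i,j}\cap\partial P$ agrees with the trace of $u$ from $P$. Hence $v:=u\circ\Phi_{i,j}\in H^1(rD_{a_i})$ is an admissible competitor in \eqref{eq:def-h} with $b=\beta$, since rigid motions preserve integrals, gradient norms and arclength. No condition is imposed on the other two sides of the cell, so the cell term is at most $\T^{\mathrm{Mix}}_\beta(rD_{a_i})$. The scaling law \eqref{eq:cell-scaling} turns this bound into $r^4h_{\beta r}(a_i)$. Summing over the $2N$ cells gives \eqref{eq:relaxation}. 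We would include a one-line check of \eqref{eq:cell-scaling}: substitute $v(x)=r^2\tilde v(x/r)$. The area term then scales as $r^2\cdot r^2$, the Dirichlet term as $r^2\cdot r^2$, and the boundary term as $\beta\, r^4\cdot r$, which produces $r^4$ times the functional on $D_a$ with parameter $\beta r$.

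For equality in the regular case, let $w$ be the maximizer of the mixed problem \eqref{eq:mixed-cell-pde} on $rD_{a_*}$. Transplant $w$ to each cell $C_{i,j}$ via $\Phi_{i,j}$, reflecting across the cuts. Because all cells are congruent and are obtained from one another by reflection across the cut segments, the resulting function $U$ is well defined and continuous across the cuts, so $U\in H^1(P)$. Evaluating the global functional at $U$ gives exactly $2N r^4 h_{\beta r}(a_*)$, and therefore $\T_\beta(P)\ge 2r^4\sum_i h_{\beta r}(a_i)$. Combined with the upper bound, this yields equality. There is an alternative route through uniqueness: the symmetry group of the regular $N$-gon forces $u$ to be even across every cut, so $\partial_\nu u=0$ there, and $u|_{C_{i,j}}$ then solves \eqref{eq:mixed-cell-pde}, making each inequality an equality.

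The main obstacle is the gluing step. One must check that the reflected functions on adjacent cells have matching traces on the shared segments, so that the glued function belongs to $H^1(P)$. Adjacent cells share either a segment from $O$ to a vertex or a segment from $O$ to a tangency point, and in both cases the reflection across that segment swaps the two cells. The traces therefore agree, and a piecewise-$H^1$ function with matching traces across Lipschitz interfaces lies in $H^1(P)$. Everything else is routine bookkeeping.
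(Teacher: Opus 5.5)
Your proof is correct, but the equality case takes a different route from the paper. The inequality itself is obtained exactly as in the paper: you evaluate the global functional at $u_{P,\beta}$ and bound each cell contribution by the mixed-cell maximum. This is the paper's relaxation of the matching conditions across the cuts, followed by \eqref{eq:cell-scaling}.

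Your ``alternative route'' for equality is precisely the paper's argument. Uniqueness gives invariance of $u_{P,\beta}$ under the reflections in the cuts, hence zero normal derivative there, so each restriction solves \eqref{eq:mixed-cell-pde}. Your primary argument is instead purely variational: you transplant the cell maximizer $w$ to all $2N$ cells, glue, and read off $\T_\beta(P)\ge 2Nr^4h_{\beta r}(a_*)$ from the variational characterization \eqref{eq:variational-global}.

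What this buys you is independence from uniqueness and from any discussion of normal derivatives on interior cuts. You never need to check that $u_{P,\beta}|_{C_{i,j}}$ satisfies the weak mixed problem against test functions that do not vanish on the cuts. In the paper's route that check requires either regularity of $u_{P,\beta}$ up to the cut endpoints or a reflection extension of test functions. You even recover the paper's conclusion $U=u_{P,\beta}$ a posteriori, from uniqueness of the global maximizer. In exchange, the paper's route is shorter and identifies the torsion function on each cell directly.

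The one step you should spell out is why the traces match across a cut. The motion $\Phi_{i,j}$ is uniquely determined by sending the right-angle vertex $(r,0)$ to the tangency point and $r\Gamma_{a_*}$ onto the boundary leg. The reflection $\sigma$ in a shared cut preserves $P$ and $\partial P$ and swaps the two cells. Hence the motion attached to the neighbouring cell is $\sigma\circ\Phi_{i,j}$, so $U$ there equals $U|_{C_{i,j}}\circ\sigma$. The traces therefore agree on the segment fixed by $\sigma$, and this also covers the isosceles case $N=4$.
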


\begin{proof}
Joining $O$ to the vertices and tangency points decomposes $P$ into
$2N$ right triangles; the two triangles adjacent to the $i$-th vertex
are congruent to $rD_{a_i}$.  In
\eqref{eq:variational-global}, allow the functions on different cells
to vary independently, without matching their traces at the internal cuts. This enlarges the admissible space and hence gives an
upper bound.  The side inherited from $\partial P$ retains the Robin
term, whereas the two internal sides carry the natural homogeneous
Neumann condition.  Thus the relaxed maximum is the sum of the $2N$
mixed-cell energies, and \eqref{eq:cell-scaling} gives
\eqref{eq:relaxation}.

If $P$ is regular, every internal cut lies on a reflection axis of
$P$.  Uniqueness makes the Robin torsion function invariant under
these reflections; its weak normal derivative therefore vanishes on
each cut.  Its restrictions to the cells solve
\eqref{eq:mixed-cell-pde}, so the relaxation is exact.
\end{proof}

Let $A>0$, and let $R_{N,A}$ denote the regular $N$-gon of area $A$. Its inradius is
\begin{equation*}
r_{N,A}=\left(\frac{A}{N\tan(\pi/N)}\right)^{1/2}.
\end{equation*}
Since Proposition~\ref{prop:relaxation} is exact for regular polygons, it follows that
\begin{equation}\label{eq:regular-value}
\T_\beta(R_{N,A})
=2N r_{N,A}^4
h_{\beta r_{N,A}}\left(\tan\frac{\pi}{N}\right).
\end{equation}

\section{Spectral reduction of the mixed-cell energy}\label{sec:cell-representation}

We adapt the spectral reduction of Gui, Hu, and Li \cite{GuiHuLi2026} to the Robin setting. As in their Dirichlet analysis, we first map the varying triangle $D_a$ to a fixed reference triangle and then derive a spectral-measure representation of the cell energy. The Robin boundary condition introduces the additional parameter $b$, so the resulting spectral measure $\mu_b$ also depends on $b$. This dependence will be important later for computing the exact moments of $\mu_b$ and deriving the angular second-derivative estimate.

To start with, we set
\begin{equation*}
D:=\{(x,t)\in\R^2:0<t<x<1\}.
\end{equation*}
For $b>0$, define the linear functional $I$ and the symmetric bilinear forms $Q_b$ and $Y$ on $H^1(D)$ by
\begin{equation*}
\begin{aligned}
I(v)&:=\int_D v,\\
Q_b(u,v)&:=\int_D u_xv_x
+b\int_0^1u(1,t)v(1,t)\dd t,\\
Y(u,v)&:=\int_D u_tv_t.
\end{aligned}
\end{equation*}
We write $Q_b(v):=Q_b(v,v)$ and $Y(v):=Y(v,v)$. For $a>0$, the bi-Lipschitz map $(x,t)\mapsto (x,at)$
sends $D$ onto $D_a$ and $\Gamma_{1}$ onto $\Gamma_a$.  Pulling back the variational problem defining $h_b(a)$ therefore gives
\begin{equation}\label{eq:h-fixed}
h_b(a)=
\max_{v\in H^1(D)}
\left\{
2aI(v)-aQ_b(v)-a^{-1}Y(v)
\right\}.
\end{equation}
Indeed, let \(u_{a,b}\) be the unique maximizer in
\eqref{eq:def-h}, and define its pullback to \(D\) by
\[
 v_{a,b}(x,t):=u_{a,b}(x,at),
 \qquad (x,t)\in D.
\]
Then $v_{a,b}$ achieves the maximum of \eqref{eq:h-fixed} and has the value
\begin{align}
\label{eq:mix-h}
h_{b}(a)=\mathcal{T}^{\mathrm{Mix}}_{b}(D_{a})=\int_{D_{a}}u_{a,b}\, \dd x \dd y.
\end{align}

For any fixed $b>0$, we first show that $Q_b^{1/2}$ defines a norm on
$H^1(D)$. Indeed, for almost every $t\in(0,1)$ and $x\in(t,1)$,
\[
v(x,t)=v(1,t)-\int_x^1 v_x(s,t)\,\dd s.
\]
By the Cauchy--Schwarz inequality and integration over $D$, we obtain
\begin{equation}\label{eq:Q-controls-L2}
\|v\|_{L^2(D)}^2
\le
2\int_0^1 |v(1,t)|^2\dd t
+
2\int_D |v_x|^2
\le
2\max\{1,b^{-1}\}Q_b(v).
\end{equation}
In particular, $Q_b(v)=0$ implies $v=0$ almost everywhere in $D$.
Thus $Q_b$ is positive definite, and $Q_b^{1/2}$ is a norm on
$H^1(D)$. We let $\mathcal H_b$ denote the completion of $H^1(D)$
with respect to this norm. Estimate \eqref{eq:Q-controls-L2} shows that
the inclusion of $H^1(D)$ into $L^2(D)$ is continuous with respect to
the $Q_b^{1/2}$-norm and therefore extends continuously to
$\mathcal H_b$. In particular,
\[
|I(v)|
\le |D|^{1/2}\|v\|_{L^2(D)}
\le |D|^{1/2}
\bigl(2\max\{1,b^{-1}\}\bigr)^{1/2}Q_b(v)^{1/2},
\qquad v\in H^1(D),
\]
so $I$ extends uniquely to a continuous linear functional on
$\mathcal H_b$.

By the Riesz representation theorem, there exists a unique
$w_b\in\mathcal H_b$ such that
\begin{equation}\label{eq:Riesz}
Q_b(w_b,\varphi)=I(\varphi),
\qquad
\forall\varphi\in\mathcal H_b.
\end{equation}
The Riesz representative $w_b$ can be computed explicitly and is given by
\begin{align}
\label{eq:wb}
w_b(x,t)
=
\frac{1-t}{b}
+\frac{(1-t)^2-(x-t)^2}{2}.
\end{align}
Indeed, the function on the right-hand side belongs to $H^1(D)$ and
satisfies
\[
(w_b)_x(x,t)=t-x,
\qquad
b w_b(1,t)=1-t.
\]
Hence, for every $\varphi\in H^1(D)$, integration by parts in the
$x$-variable gives
\begin{align*}
Q_b(w_b,\varphi)
&=
\int_0^1
\left[
\int_t^1 (t-x)\varphi_x(x,t)\,\dd x
+
(1-t)\varphi(1,t)
\right]\dd t\\
&=
\int_0^1\int_t^1\varphi(x,t)\,\dd x\,\dd t
=
I(\varphi).
\end{align*}
Since $H^1(D)$ is dense in $\mathcal H_b$ and both sides are continuous
with respect to the $Q_b^{1/2}$-norm, the identity extends to every
$\varphi\in\mathcal H_b$, proving \eqref{eq:Riesz}.

We next consider the nonnegative form $Y$. Define the form norm
\[
\|v\|_{Y,b}:=\bigl(Q_b(v)+Y(v)\bigr)^{1/2},
\qquad v\in H^1(D).
\]
By \eqref{eq:Q-controls-L2},
\[
\|v\|_{H^1(D)}^2
\le
C_b\bigl(Q_b(v)+Y(v)\bigr)
\]
for some constant $C_b>0$. Conversely, since $D$ is a bounded
Lipschitz domain, the trace theorem yields
\[
Q_b(v)+Y(v)
\le
C_b'\|v\|_{H^1(D)}^2
\]
for some constant $C_b'>0$. Thus $\|\cdot\|_{Y,b}$ is equivalent to
the $H^1(D)$-norm on $H^1(D)$. Since $H^1(D)$ is dense in
$\mathcal H_b$ by construction and is complete with respect to the
form norm, $Y$, with form domain $H^1(D)$, is a densely defined closed
nonnegative form on $\mathcal H_b$.

Let $\mathcal K$ denote the null space of $Y$:
\begin{equation*}
\mathcal K
:=
\{q\in H^1(D):Y(q)=0\}
=
\{q\in H^1(D):q=q(x)\text{ a.e. in }D\}.
\end{equation*}
Indeed, $Y(q)=0$ if and only if $q_t=0$ almost everywhere in $D$.
By the slicing property of Sobolev functions, for almost every
$x\in(0,1)$ the function $t\mapsto q(x,t)$ belongs to $H^1(0,x)$
and has weak derivative $q_t(x,\cdot)$. It is therefore constant
almost everywhere on $(0,x)$ whenever $q_t=0$, which gives the above
description of $\mathcal K$. Moreover, $\mathcal K$ is closed in $\mathcal H_b$. To see this, let
$q_n\in\mathcal K$ and suppose that
\[
q_n\longrightarrow q
\qquad\text{in }\mathcal H_b.
\]
Since $q_n-q_m\in\mathcal K$, we have
\[
Y(q_n-q_m)=0,
\]
and hence
\[
\|q_n-q_m\|_{Y,b}^2
=
Q_b(q_n-q_m).
\]
Thus $\{q_n\}$ is Cauchy with respect to the form norm. Since $Y$ is
closed, we therefore have $q\in H^{1}(D)$. Moreover,
\[
Y(q)
=
Y(q-q_n)
\longrightarrow0,
\]
because $Y(q_n)=0$. Hence $Y(q)=0$, so $q\in\mathcal K$. This proves
that $\mathcal K$ is closed in $\mathcal H_b$.

Since $\mathcal K$ is a closed subspace of the Hilbert space
$(\mathcal H_b,Q_b)$, the $Q_b$-orthogonal projection
$P_{\mathcal K}$ onto $\mathcal K$ is well defined. We set
\(
p_b:=P_{\mathcal K}w_b
\)
and
\(
w_{1,b}:=w_b-p_b\in\mathcal K^\perp.
\)
We next determine $p_b$ explicitly. We claim that
\begin{align}
\label{eq:pb}
p_b(x)
=
\frac{1-x^2}{4}+\frac{1}{2b}.
\end{align}
Indeed, the orthogonal projection is characterized by
\begin{equation}\label{eq:projection-characterization}
Q_b(p_b,q)
=
Q_b(w_b,q)
=
I(q)
\qquad
\text{for every }q\in\mathcal K,
\end{equation}
where the second equality follows from \eqref{eq:Riesz}.

To verify \eqref{eq:pb}, direct calculation shows
\[
\int_0^1 x|q'(x)|^2\dd x=\int_D |q_x|^2
<\infty.
\]
Hence, for every $\varepsilon\in(0,1)$,
\[
\int_\varepsilon^1|q'(x)|^2 \dd x
\le \frac{1}{\varepsilon}
\int_\varepsilon^1x|q'(x)|^2 \dd x<\infty,
\]
so that $q\in H^1(\varepsilon,1)$.  Integration by parts on $(\varepsilon,1)$ gives
\begin{equation}\label{eq:check_q_integral}
-\frac12\int_\varepsilon^1x^2q'(x)\dd x
+\frac12q(1)=\frac12\varepsilon^2q(\varepsilon)
+\int_\varepsilon^1xq(x)\dd x.
\end{equation}
Since
\[
q(\varepsilon)=q(1)-\int_\varepsilon^1q'(s)\dd s,
\]
the Cauchy--Schwarz inequality yields
\begin{align*}
|q(\varepsilon)|
&\leq |q(1)|+\int_{\varepsilon}^{1}|q'(s)|\dd s\\
&\leq |q(1)|
+\left(\int_{\varepsilon}^{1}s|q'(s)|^{2}\dd s\right)^{1/2}
 \left(\int_{\varepsilon}^{1}\frac{1}{s}\dd s\right)^{1/2}\\
&\leq |q(1)|
+\left(\int_{0}^{1}s|q'(s)|^{2}\dd s\right)^{1/2}
 \left(\log \frac{1}{\varepsilon}\right)^{1/2}.
\end{align*}
Thus
\(
\varepsilon^2q(\varepsilon)\to 0\) as 
\(\varepsilon\to0\).
Letting $\varepsilon\to0$ in
\eqref{eq:check_q_integral}, we obtain
\[
Q_b(p_b,q)
=\int_0^1xq(x)\dd x
=\int_D q
=I(q).
\]
Therefore $p_b$ satisfies
\eqref{eq:projection-characterization}, and hence $p_{b}$ is the projection of $w_{b}$ onto $\mathcal{K}$.

We next remove the zero-energy directions of \(Y\). Since \(I(v)=Q_b(w_b,v)\), completing the square in \eqref{eq:h-fixed} gives
\[
2aI(v)-aQ_b(v)-a^{-1}Y(v)
=
aQ_b(w_b)-aQ_b(v-w_b)-a^{-1}Y(v).
\]
For \(v\in H^1(D)\), write $v=q+z$, where $q=P_{\mathcal K}v\in\mathcal K$. Since \(w_b=p_b+w_{1,b}\), orthogonality gives
\[
Q_b(v-w_b)
=
Q_b(q-p_b)+Q_b(z-w_{1,b}),
\qquad
Y(v)=Y(z).
\]
Minimizing first with respect to \(q\), whose unique minimizing value is
\(q=p_b\), we obtain
\begin{equation}\label{eq:h-m}
h_b(a)=aI(w_b)-m_b(a),
\end{equation}
where
\begin{equation}\label{eq:m-def}
m_b(a)
:=
\inf_{z\in\mathscr V_b}
\left\{
aQ_b(z-w_{1,b})+a^{-1}Y(z)
\right\},
\qquad
\mathscr V_b:=H^1(D)\cap\mathcal K^\perp.
\end{equation}

We now give a spectral representation of this reduced problem. Set
\(\mathscr H_b:=\mathcal K^\perp\) and \(\langle u,v\rangle_{\mathscr H_b}:=Q_b(u,v)\). Since \(\mathcal K\) is closed in \(\mathcal H_b\),
\(\mathscr H_b\) is a Hilbert space. Moreover, the restriction of \(Y\)
to \(\mathscr V_b\) is a densely defined, closed, symmetric, and
nonnegative form on \(\mathscr H_b\). Indeed, density follows by
applying \(Id-P_{\mathcal K}\) to an \(H^1(D)\)-approximation in
\(\mathcal H_b\), while closedness follows from the closedness of \(Y\)
on \(\mathcal H_b\) and the closedness of \(\mathcal K^\perp\). Since $Y$ is a densely defined, closed, nonnegative form, by the representation theorem
\cite[Definition~10.4 and Theorem~10.7]{Schmudgen2012}, there exists a
unique nonnegative self-adjoint operator \(A_b\) on \(\mathscr H_b\)
such that
\begin{equation}\label{eq:operator-domain}
Y(u,v)=Q_b(A_bu,v),
\qquad
u\in\mathcal D(A_b),\quad v\in\mathscr V_b,
\end{equation}
where 
\[\mathcal{D}(A_{b}):=\{u\in \mathscr V_b: \exists \, e_{u}\in \mathcal{K}^{\perp}\, \text{such that}\, Y(u,v)=Q_{b}(e_{u}, v)\, \text{for all }v\in\mathscr V_b \} \quad\text{and}\quad A_{b}u:=e_{u}.\]
The
square-root representation
\cite[Proposition~10.5 and Eq.~(10.12)]{Schmudgen2012}
then gives
\begin{equation*}
\mathcal D(A_b^{1/2})=\mathscr V_b,
\qquad
Y(u,v)=Q_b(A_b^{1/2}u,A_b^{1/2}v).
\end{equation*}
Furthermore, \(\ker A_b=\{0\}\). Indeed, if \(A_bu=0\), then
\[
Y(u)=Q_b(A_bu,u)=0,
\]
so that \(u\in\mathcal K\); since \(u\in\mathscr H_b=\mathcal K^\perp\),
we obtain \(u=0\).

By the spectral theorem for self-adjoint operators
\cite[Definition~4.2 and Theorem~5.7]{Schmudgen2012}, there exists a
unique projection-valued spectral measure
\(E_b\) on the Borel $\sigma$-algebra $\mathcal{B}(\mathbb{R})$
such that
\begin{equation*}
 A_b=\int_{\mathbb R}\lambda\,\dd E_b(\lambda).
\end{equation*}
More precisely, each \(E_b(B)\) is a
\(Q_b\)-orthogonal projection on \(\mathscr H_b\). Since
\(A_b\ge0\) and \(\ker A_b=\{0\}\),
\[E_b((-\infty,0))=E_b(\{0\})=0,
\qquad
E_b((0,\infty))=\operatorname{Id}_{\mathscr H_b}.\]

We define the scalar spectral measure associated with \(w_{1,b}\) by
\begin{equation}
\label{eq:def-mub}
\mu_b(B)
:=
Q_b\bigl(E_b(B)w_{1,b},w_{1,b}\bigr),
\qquad
B\in\mathcal B((0,\infty)).
\end{equation}
By the spectral theorem, \(\mu_b\) is a finite positive Borel measure
and \(\mu_b((0,\infty))=Q_b(w_{1,b})\). Moreover, for every bounded Borel function
\(g:(0,\infty)\to\mathbb R\),
\begin{equation*}
Q_b\bigl(g(A_b)w_{1,b},w_{1,b}\bigr)=\int_{(0,\infty)}g(\lambda)\dd\mu_b(\lambda).
\end{equation*}
Indeed, the corresponding identity first holds for characteristic
functions by the definition of \(\mu_b\), then for simple functions by
linearity, and finally for bounded Borel functions by bounded
approximation.  This formula converts the remaining operator minimization \eqref{eq:m-def} into a
scalar minimization over the spectral variable \(\lambda\).

We can now solve the reduced minimization problem \eqref{eq:m-def} explicitly.

\begin{proposition}
\label{prop:spectral-representation}
Let $A_{b}$ be defined by \eqref{eq:operator-domain}. For every \(a,b>0\), the infimum in \eqref{eq:m-def} is attained at
\begin{equation}\label{eq:spectral-minimizer}
 z_{a,b}
 =
 a^2(a^2\operatorname{Id}+A_b)^{-1}w_{1,b}
 \in\mathcal D(A_b),
\end{equation}
and
\begin{equation}\label{eq:m-spectral}
 m_b(a)
 =
 \int_{(0,\infty)}
 \frac{a\lambda}{a^2+\lambda}\,\dd\mu_b(\lambda).
\end{equation}
\end{proposition}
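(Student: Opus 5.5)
The plan is to rewrite the functional in \eqref{eq:m-def} in operator form on $\mathscr H_b$, minimize it by completing the square in the functional calculus of $A_b$, and then evaluate the minimum through the spectral measure $\mu_b$. For $z\in\mathscr V_b=\mathcal D(A_b^{1/2})$ we have
\[
J(z):=aQ_b(z-w_{1,b})+a^{-1}Y(z)=aQ_b(z-w_{1,b})+a^{-1}Q_b(A_b^{1/2}z,A_b^{1/2}z).
\]
Set $B:=a^2\operatorname{Id}+A_b$. This is self-adjoint with $B\ge a^2>0$, so $B^{-1}$ is bounded and maps $\mathscr H_b$ onto $\mathcal D(A_b)\subset\mathscr V_b$. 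In particular the candidate $z_{a,b}=a^2B^{-1}w_{1,b}$ lies in $\mathcal D(A_b)$. Note also that $w_{1,b}=w_b-p_b$ lies in $H^1(D)$ by \eqref{eq:wb} and \eqref{eq:pb}, hence in $\mathscr V_b$.

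\textbf{Step 1: Euler--Lagrange identity.} Let $\varphi\in\mathscr V_b$. Using \eqref{eq:operator-domain} with $u=z_{a,b}\in\mathcal D(A_b)$,
\[
aQ_b(z_{a,b}-w_{1,b},\varphi)+a^{-1}Y(z_{a,b},\varphi)
=a^{-1}Q_b\bigl(Bz_{a,b}-a^2w_{1,b},\varphi\bigr)=0,
\]
because $Bz_{a,b}=a^2w_{1,b}$.

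\textbf{Step 2: exact expansion.} Since $J$ is a nonnegative quadratic functional, the identity of Step 1 gives, for every $\varphi\in\mathscr V_b$,
\[
J(z_{a,b}+\varphi)=J(z_{a,b})+aQ_b(\varphi)+a^{-1}Y(\varphi)\ge J(z_{a,b}).
\]
Hence the infimum is attained at $z_{a,b}$, and the minimizer is unique since $Q_b$ is positive definite.

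\textbf{Step 3: evaluation.} Use Step 1 with $\varphi=z_{a,b}$. This gives $a^{-1}Y(z_{a,b})=-aQ_b(z_{a,b}-w_{1,b},z_{a,b})$, and therefore
\[
m_b(a)=J(z_{a,b})=aQ_b(z_{a,b}-w_{1,b},\,z_{a,b}-w_{1,b})-aQ_b(z_{a,b}-w_{1,b},z_{a,b})=aQ_b(w_{1,b}-z_{a,b},\,w_{1,b}).
\]
Now $w_{1,b}-z_{a,b}=g(A_b)w_{1,b}$ with
\[
g(\lambda)=1-\frac{a^2}{a^2+\lambda}=\frac{\lambda}{a^2+\lambda},
\]
which is a bounded Borel function on $(0,\infty)$. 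The bounded functional-calculus formula for $\mu_b$ then yields
\[
m_b(a)=\int_{(0,\infty)}\frac{a\lambda}{a^2+\lambda}\,\dd\mu_b(\lambda),
\]
which is \eqref{eq:m-spectral}.

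\textbf{Main obstacle.} The delicate point is the domain bookkeeping rather than the algebra. One must check that $w_{1,b}\in\mathscr H_b$, that $B^{-1}$ maps into $\mathcal D(A_b)$ by the spectral theorem, and that \eqref{eq:operator-domain} may be used with test functions from all of $\mathscr V_b$. Everything used in Step 3 involves only bounded functions of $A_b$, so no integrability of $\lambda$ against $\mu_b$ is required.
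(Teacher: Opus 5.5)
Your proof is correct. It follows the same overall scheme as the paper: an Euler--Lagrange identity, an exact quadratic expansion giving minimality and uniqueness, and a spectral evaluation. Two steps, however, are organized differently.

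\textbf{Construction of the minimizer.} The paper first obtains $z_{a,b}$ abstractly, by Lax--Milgram on $\mathscr V_b$ with the form norm. Only afterwards does it use the definition of the operator associated with $Y$ to show $z_{a,b}\in\mathcal D(A_b)$ and $z_{a,b}=a^2(a^2\operatorname{Id}+A_b)^{-1}w_{1,b}$. You start from the resolvent expression instead. It lies in $\mathcal D(A_b)\subset\mathscr V_b$ automatically, since $a^2\operatorname{Id}+A_b\ge a^2$ is boundedly invertible. You then check the Euler--Lagrange identity directly from $Y(u,v)=Q_b(A_bu,v)$. This makes the Lax--Milgram step unnecessary; the paper itself also invokes $-a^2\notin\sigma(A_b)$, so that step is somewhat redundant there.

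\textbf{Evaluation of the minimum.} The paper computes $Q_b(z_{a,b}-w_{1,b})$ and $Y(z_{a,b})=Q_b(A_b^{1/2}z_{a,b})$ separately. This requires the square-root representation $\mathcal D(A_b^{1/2})=\mathscr V_b$ and norms of functions of $A_b$ applied to $w_{1,b}$. You instead test the Euler--Lagrange identity with $\varphi=z_{a,b}$, which collapses $m_b(a)$ to $aQ_b\bigl(w_{1,b}-z_{a,b},w_{1,b}\bigr)$. That is a single bounded functional-calculus expression, so you need only the identity $Q_b(g(A_b)w_{1,b},w_{1,b})=\int g\,\dd\mu_b$, which the paper proves just before the proposition.

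Your argument is therefore a genuine streamlining of the paper's. A minor point: the phrase ``since $J$ is a nonnegative quadratic functional'' in your Step 2 is not needed. The expansion there is pure bilinearity plus the vanishing of the cross term.
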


\begin{proof}
Fix \(a,b>0\), and consider the functional
\[
\mathcal F_{a,b}(z)
:=
aQ_b(z-w_{1,b})+a^{-1}Y(z),
\qquad
z\in\mathscr V_b.
\]
We first derive the variational equation that a minimizer of
\eqref{eq:m-def} must satisfy.

Suppose that \(z\in\mathscr V_b\) is a minimizer. For every
\(\eta\in\mathscr V_b\) and \(s\in\mathbb R\),
\[
\mathcal F_{a,b}(z)
\le
\mathcal F_{a,b}(z+s\eta).
\]
Expanding the right-hand side gives
\[
\begin{aligned}
\mathcal F_{a,b}(z+s\eta)
&=
\mathcal F_{a,b}(z)
+
2s\left[
aQ_b(z-w_{1,b},\eta)
+
a^{-1}Y(z,\eta)
\right]+
s^2\left[
aQ_b(\eta)+a^{-1}Y(\eta)
\right].
\end{aligned}
\]
Since the function
\(s\mapsto\mathcal F_{a,b}(z+s\eta)\)
has a minimum at \(s=0\), 
\[
aQ_b(z-w_{1,b},\eta)
+
a^{-1}Y(z,\eta)
=
0,
\qquad
\forall\eta\in\mathscr V_b.
\]
Thus any minimizer must satisfy the Euler--Lagrange equation
\begin{equation}\label{eq:Euler-spectral}
aQ_b(z-w_{1,b},\eta)
+
a^{-1}Y(z,\eta)
=
0,
\qquad
\forall\eta\in\mathscr V_b.
\end{equation}
Equivalently,
\begin{equation}\label{eq:variational-zab}
aQ_b(z,\eta)+a^{-1}Y(z,\eta)
=
aQ_b(w_{1,b},\eta),
\qquad
\forall\eta\in\mathscr V_b.
\end{equation}

We now solve \eqref{eq:variational-zab}. Since
the restriction of \(Y\) to
\(\mathscr V_b\) is closed, \(\mathscr V_b\), equipped with the form
norm
\[
\|u\|_{Y,b}
:=
\bigl(Q_b(u)+Y(u)\bigr)^{1/2},
\]
is a Hilbert space. Define the bilinear form
\[
\mathfrak b_a(u,v)
:=
aQ_b(u,v)+a^{-1}Y(u,v),
\qquad
u,v\in\mathscr V_b.
\]
For every \(u\in\mathscr V_b\),
\[
\mathfrak b_a(u,u)
=
aQ_b(u)+a^{-1}Y(u)
\ge
\min\{a,a^{-1}\}\|u\|_{Y,b}^2,
\]
so \(\mathfrak b_a\) is coercive. Moreover,
\[
|\mathfrak b_a(u,v)|
\le
\max\{a,a^{-1}\}
\|u\|_{Y,b}\|v\|_{Y,b},
\]
and hence \(\mathfrak b_a\) is continuous on
\(\mathscr V_b\times\mathscr V_b\).

The linear functional
\(
\eta\mapsto aQ_b(w_{1,b},\eta)
\)
is also continuous on \(\mathscr V_b\), since
\[
|aQ_b(w_{1,b},\eta)|
\le
aQ_b(w_{1,b})^{1/2}Q_b(\eta)^{1/2}
\le
aQ_b(w_{1,b})^{1/2}\|\eta\|_{Y,b}.
\]
The Lax--Milgram theorem therefore yields a unique
\(z_{a,b}\in\mathscr V_b\) satisfying
\eqref{eq:variational-zab}, or equivalently,
\eqref{eq:Euler-spectral}.

We next verify that this variational solution is indeed the unique
minimizer in \eqref{eq:m-def}. Let \(z\in\mathscr V_b\) be arbitrary
and write
\[
z=z_{a,b}+\eta,
\qquad
\eta\in\mathscr V_b.
\]
Expanding the functional gives
\[
\begin{aligned}
\mathcal F_{a,b}(z)
&=
aQ_b(z_{a,b}-w_{1,b})
+a^{-1}Y(z_{a,b})\\
&\qquad
+2aQ_b(z_{a,b}-w_{1,b},\eta)
+2a^{-1}Y(z_{a,b},\eta)\\
&\qquad
+aQ_b(\eta)+a^{-1}Y(\eta).
\end{aligned}
\]
The mixed terms vanish by \eqref{eq:Euler-spectral}. Hence
\[
\mathcal F_{a,b}(z)
=
\mathcal F_{a,b}(z_{a,b})
+
aQ_b(\eta)+a^{-1}Y(\eta).
\]
Since \(Q_b\) is positive definite and \(Y\) is nonnegative,
\[
aQ_b(\eta)+a^{-1}Y(\eta)\ge0,
\]
with equality if and only if \(\eta=0\). Therefore \(z_{a,b}\) is the
unique minimizer in \eqref{eq:m-def}.

We now identify \(z_{a,b}\) explicitly in terms of \(A_b\).
Multiplying \eqref{eq:variational-zab} by \(a\) and rearranging gives
\[
Y(z_{a,b},\eta)
=
Q_b\bigl(a^2(w_{1,b}-z_{a,b}),\eta\bigr),
\qquad
\forall\eta\in\mathscr V_b.
\]
By the definition of the operator associated with the closed form
\(Y\), it follows that
\(
z_{a,b}\in\mathcal D(A_b)
\)
and
\(
A_bz_{a,b}
=
a^2(w_{1,b}-z_{a,b}).
\)
Consequently,
\[
(a^2\operatorname{Id}+A_b)z_{a,b}
=
a^2w_{1,b}.
\]
Since \(A_b\) is nonnegative and self-adjoint,
\(
\sigma(A_b)\subset[0,\infty).
\)
Thus \(-a^2\notin\sigma(A_b)\), so
\(a^2\operatorname{Id}+A_b\) is invertible. Therefore
\begin{equation*}
z_{a,b}
=
a^2(a^2\operatorname{Id}+A_b)^{-1}w_{1,b},
\end{equation*}
which proves \eqref{eq:spectral-minimizer}.

It remains to compute the minimum. By the spectral functional calculus,
\[
z_{a,b}
=
\int_{(0,\infty)}
\frac{a^2}{a^2+\lambda}\,
\dd E_b(\lambda)w_{1,b},
\]
and hence
\[
z_{a,b}-w_{1,b}
=
-\int_{(0,\infty)}
\frac{\lambda}{a^2+\lambda}\,
\dd E_b(\lambda)w_{1,b}.
\]
By the definition of \(\mu_b\),
\[
Q_b(z_{a,b}-w_{1,b})
=
\int_{(0,\infty)}
\left(
\frac{\lambda}{a^2+\lambda}
\right)^2
\dd\mu_b(\lambda).
\]
Furthermore, since
\(\mathcal D(A_b^{1/2})=\mathscr V_b\), the second representation
theorem gives
\[
Y(z_{a,b})
=
Q_b(A_b^{1/2}z_{a,b})
=
\int_{(0,\infty)}
\lambda
\left(
\frac{a^2}{a^2+\lambda}
\right)^2
\dd\mu_b(\lambda).
\]
It follows that
\[
\begin{aligned}
m_b(a)
&=
aQ_b(z_{a,b}-w_{1,b})
+a^{-1}Y(z_{a,b})\\
&=
\int_{(0,\infty)}
\left[
a\left(
\frac{\lambda}{a^2+\lambda}
\right)^2
+
a^{-1}\lambda
\left(
\frac{a^2}{a^2+\lambda}
\right)^2
\right]
\dd\mu_b(\lambda)\\
&=
\int_{(0,\infty)}
\frac{a\lambda(\lambda+a^2)}
{(a^2+\lambda)^2}
\,\dd\mu_b(\lambda)\\
&=
\int_{(0,\infty)}
\frac{a\lambda}{a^2+\lambda}
\,\dd\mu_b(\lambda).
\end{aligned}
\]
This proves \eqref{eq:m-spectral}.
\end{proof}

The angular concavity in the next section requires only the total mass and
the inverse first moment of $\mu_b$.  Both can be computed exactly.

\begin{lemma}\label{lem:moments}
Let $\mu_b$ be the measure defined by \eqref{eq:def-mub}, and let
$h_b(a)$ be defined by \eqref{eq:h-fixed}. Then the total mass and the
inverse first moment of $\mu_b$ are given by
\begin{equation}\label{eq:moments}
 \int_{(0,\infty)}\dd\mu_b(\lambda)
 =\frac{b+4}{48b},
 \qquad
 \int_{(0,\infty)}\lambda^{-1}\,\dd\mu_b(\lambda)
 =\frac{1}{48}.
\end{equation}
Moreover, $h_b(a)$ admits the representation
\begin{equation}\label{eq:h-representation}
 h_b(a)
 =a\left(\frac{1}{12}+\frac{1}{3b}\right)
 -\int_{(0,\infty)}
 \frac{a\lambda}{a^2+\lambda}\,\dd\mu_b(\lambda).
\end{equation}
\end{lemma}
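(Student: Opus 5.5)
The plan is to read off all three claims from the orthogonal decomposition $w_b=p_b+w_{1,b}$ and the explicit formulas \eqref{eq:wb} and \eqref{eq:pb}. The spectral theorem gives $\mu_b((0,\infty))=Q_b(w_{1,b})$. For the inverse moment I will use the variational identity
\[
\int_{(0,\infty)}\lambda^{-1}\,\dd\mu_b
=\sup_{v\in\mathscr V_b}\bigl\{2Q_b(w_{1,b},v)-Y(v)\bigr\}.
\]
This follows from $Y(v)=Q_b(A_b^{1/2}v)$ and spectral calculus. Equivalently, it equals $Y(u)$ for any $u\in\mathscr V_b$ with $Y(u,v)=Q_b(w_{1,b},v)$ for all $v\in\mathscr V_b$, i.e.\ $A_bu=w_{1,b}$.

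\emph{Step 1: representation \eqref{eq:h-representation}.} Testing \eqref{eq:Riesz} with $\varphi=w_b$ gives $Q_b(w_b)=I(w_b)$. I compute $I(w_b)$ directly from \eqref{eq:wb}. The term $(1-t)/b$ contributes $\frac1b\int_0^1(1-t)^2\,\dd t=\frac1{3b}$. The remaining term contributes
\[
\int_0^1\Bigl[\tfrac{(1-t)^3}{2}-\tfrac{(1-t)^3}{6}\Bigr]\dd t=\tfrac1{12}.
\]
Hence $I(w_b)=\frac1{12}+\frac1{3b}$. Inserting this and \eqref{eq:m-spectral} into \eqref{eq:h-m} gives \eqref{eq:h-representation}.

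\emph{Step 2: total mass.} By orthogonality, $Q_b(w_{1,b})=Q_b(w_b)-Q_b(p_b)$. By \eqref{eq:projection-characterization} with $q=p_b$,
\[
Q_b(p_b)=I(p_b)=\int_0^1x\Bigl(\tfrac{1-x^2}{4}+\tfrac1{2b}\Bigr)\dd x=\tfrac1{16}+\tfrac1{4b}.
\]
Subtracting gives $\frac1{48}+\frac1{12b}=\frac{b+4}{48b}$.

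\emph{Step 3: inverse moment.} This is the main step. For $v\in H^1(D)$ write $v=P_{\mathcal K}v+(v-P_{\mathcal K}v)$. Since $w_{1,b}\perp\mathcal K$ and $Q_b(w_b,\cdot)=I$, we get
\[
Q_b(w_{1,b},v)=I(v)-Q_b(p_b,v).
\]
I then compute $Q_b(p_b,v)$ by integrating by parts in $x$ on each horizontal slice $t<x<1$. We have $p_b'(x)=-x/2$ and $b\,p_b(1)=\tfrac12$, so the boundary terms at $x=1$ cancel. This leaves
\[
Q_b(p_b,v)=\tfrac12\int_D v+\tfrac12\int_0^1 t\,v(t,t)\,\dd t,
\]
and therefore
\[
Q_b(w_{1,b},v)=\tfrac12\int_D v-\tfrac12\int_0^1 t\,v(t,t)\,\dd t .
\]
The key observation is that this is independent of $b$. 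I therefore look for $u$ with $Y(u,v)$ equal to this expression. Integrating $\int_D u_tv_t$ by parts in $t$ on $(0,x)$ shows that it suffices to have:
\begin{itemize}
\item $u_{tt}=-\tfrac12$ in $D$,
\item $u_t(x,0)=0$,
\item $u_t(x,x)=-x/2$.
\end{itemize}
All three hold for $u=-t^2/4$. Replacing $u$ by $u-P_{\mathcal K}u\in\mathscr V_b$ changes neither $Y(u,\cdot)$ nor the identity, since $Y$ vanishes on $\mathcal K$. Hence $u-P_{\mathcal K}u$ solves $A_b u=w_{1,b}$, and
\[
\int\lambda^{-1}\,\dd\mu_b=Y(u)=\int_D\tfrac{t^2}{4}=\tfrac14\int_0^1\tfrac{x^3}{3}\,\dd x=\tfrac1{48}.
\]

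The remaining care in Step 3 is functional-analytic. One must check that $u\in\mathcal D(A_b)$ indeed yields $\int\lambda^{-1}\,\dd\mu_b=Q_b(A_bu,u)=Y(u)$, which follows from $w_{1,b}=A_bu$ and the spectral calculus for the unbounded function $\lambda^{-1}$. One must also check that the slicewise integrations by parts are legitimate for $v\in H^1(D)$, which holds by density of smooth functions and continuity of the traces on $\{t=x\}$ and $\{x=1\}$.
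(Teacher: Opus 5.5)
Your proposal is correct. Steps 1 and 2 coincide with the paper's computation, and your identity $Q_b(w_{1,b},v)=\frac12\int_D v-\frac12\int_0^1 t\,v(t,t)\dd t$ is the same one the paper derives. Where you differ is in how Step 3 turns this into the inverse moment.

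The paper goes through a general dual formula. It first proves
$\int\lambda^{-1}\dd\mu_w=\sup_{\varphi}|Q_b(w,\varphi)|^2/Y(\varphi)$
for every $w$, using truncated projections $E_b([n^{-1},n])$ and allowing infinite values. It then rewrites $Q_b(w_{1,b},\varphi)=-\frac12\int_D t\varphi_t$ and bounds the quotient by $\frac1{48}$ with Cauchy--Schwarz. Finally it shows the bound is attained at $\psi_0=(\operatorname{Id}-P_{\mathcal K})(-t^2/4)$.

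You observe directly that this same $\psi_0$ satisfies $Y(\psi_0,\cdot)=Q_b(w_{1,b},\cdot)$ on $\mathscr V_b$, that is, $A_b\psi_0=w_{1,b}$. Then $\dd\mu_b=\lambda^2\dd\mu_{\psi_0}$, and functional calculus alone gives $\int\lambda^{-1}\dd\mu_b=\int\lambda\dd\mu_{\psi_0}=Y(\psi_0)=\frac1{48}$.

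What each route buys:
\begin{itemize}
\item Your route is shorter. It needs no truncation argument and no separate upper and lower bounds, only the explicit verification that $w_{1,b}$ lies in the range of $A_b$, which you supply. The sup formula you state at the outset is therefore never used, and for a general $w$ it would need the truncation argument you skip.
\item The paper's dual formula does not assume that $A_bu=w$ is solvable, and the Cauchy--Schwarz step by itself already shows the inverse moment is finite.
\end{itemize}
For this particular $w_{1,b}$, both arguments rest on the same fact: $Q_b(w_{1,b},\cdot)$ is represented through $Y$ by $-t^2/4$.
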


\begin{proof}
Direct integration of \eqref{eq:wb} and \eqref{eq:pb} gives
\begin{equation}\label{eq:direct-integrals}
 \begin{aligned}
 I(w_b)=Q_b(w_b)
 &=\int_0^1\left(\frac{(1-t)^2}{b}
 +\frac{(1-t)^3}{3}\right)\dd t
 =\frac{b+4}{12b},\\
 I(p_b)=Q_b(p_b)
 &=\int_0^1x\left(\frac{1-x^2}{4}+\frac{1}{2b}\right)\dd x
 =\frac{b+4}{16b}.
 \end{aligned}
\end{equation}
The equalities $Q_b(w_b)=I(w_b)$ and $Q_b(p_b)=I(p_b)$ follow from
\eqref{eq:Riesz} and \eqref{eq:projection-characterization}, respectively.
Since $w_b=p_b+w_{1,b}$ is an orthogonal decomposition,
\[
 \int_{(0,\infty)}\dd\mu_b
 =Q_b(w_{1,b})
 =Q_b(w_b)-Q_b(p_b)
 =\frac{b+4}{48b}.
\]

We next compute the inverse first moment of \(\mu_b\). We begin with the following basic spectral identity underlying the inverse moment:  if $w\in\mathscr H_b$ has scalar spectral measure
$\mu_w(B)=Q_b(E_b(B)w,w)$, then
\begin{equation}\label{eq:dual-spectral-characterization}
 \int_{(0,\infty)}\lambda^{-1}\,\dd\mu_w(\lambda)
 =\sup_{0\ne\varphi\in\mathscr V_b}
 \frac{|Q_b(w,\varphi)|^2}{Y(\varphi)},
\end{equation}
with the convention that both sides may be infinite.  For completeness, we include a brief proof. Set
$F_n:=E_b([n^{-1},n])$.  Here and below, \(A_b^{-1}F_n\) and \(A_b^{-1/2}F_n\) are understood in the truncated spectral sense, and hence are bounded. For $\varphi\in\mathscr V_b$, the spectral theorem
and Cauchy--Schwarz give
\[
 |Q_b(F_nw,\varphi)|
 =|Q_b(A_b^{-1/2}F_nw,A_b^{1/2}\varphi)|
 \le
 \left(\int_{[n^{-1},n]}\lambda^{-1}\,\dd\mu_w\right)^{1/2}
 Y(\varphi)^{1/2}.
\]
Since $F_nw\to w$, this proves 
\begin{align}
\label{eq:-1-leq}
\int_{(0,\infty)}\lambda^{-1}\,\dd\mu_w(\lambda)
 \geq \sup_{0\ne\varphi\in\mathscr V_b}
 \frac{|Q_b(w,\varphi)|^2}{Y(\varphi)}.
 \end{align}
Conversely, define
\(
 \varphi_n:=A_b^{-1}F_nw\in\mathcal D(A_b).
\)
Writing
$S_n:=\int_{[n^{-1},n]}\lambda^{-1}\,\dd\mu_w$, one has
\[
 Q_b(w,\varphi_n)=S_n,
 \qquad
 Y(\varphi_n)=S_n.
\]
If \(S_n>0\), then \(\varphi_n\neq0\), and hence
\[
 \frac{|Q_b(w,\varphi_n)|^2}{Y(\varphi_n)}=S_n.
\]
If \(S_n=0\), the inequality below is immediate. Thus, in either case,
\[
 \sup_{0\neq\varphi\in\mathscr V_b}
 \frac{|Q_b(w,\varphi)|^2}{Y(\varphi)}
 \geq S_n.
\]
Letting \(n\to \infty\) and applying monotone convergence, we find
\begin{align}
\label{eq:-1-geq}
\int_{(0,\infty)}\lambda^{-1}\,\dd\mu_w(\lambda)
 \leq\sup_{0\ne\varphi\in\mathscr V_b}
 \frac{|Q_b(w,\varphi)|^2}{Y(\varphi)}.
\end{align}
Together, \eqref{eq:-1-leq} and \eqref{eq:-1-geq} imply \eqref{eq:dual-spectral-characterization}.

We now apply \eqref{eq:dual-spectral-characterization} to
\(w=w_{1,b}\), whose scalar spectral measure is \(\mu_b\). From
\eqref{eq:wb} and \eqref{eq:pb},
\[(w_{1,b})_x(x,t)
=
t-\frac{x}{2},
\qquad
bw_{1,b}(1,t)
=
\frac12-t.\]
For \(\varphi\in H^1(D)\), integration by parts in the \(x\)-variable
gives
\[Q_b(w_{1,b},\varphi)
=
\frac12\int_D\varphi
-
\frac12\int_0^1t\varphi(t,t)\,\dd t.\]
On the other hand, integration by parts in the \(t\)-variable on each
interval \((0,x)\) gives
\[\int_Dt\varphi_t
=
\int_0^1x\varphi(x,x)\,\dd x
-
\int_D\varphi.\]
Combining the two identities, we
obtain
\begin{equation*}
Q_b(w_{1,b},\varphi)=
-\frac12\int_Dt\varphi_t.
\end{equation*}
Then the Cauchy--Schwarz inequality gives
\begin{equation}\label{eq:dual-CS}
|Q_b(w_{1,b},\varphi)|^2
\le
\frac14
\left(\int_Dt^2\right)
\left(\int_D|\varphi_t|^2\right)=\frac1{48}Y(\varphi),
\qquad
\forall\varphi\in H^1(D).
\end{equation}
In particular, since
\(\mathscr V_b\subset H^1(D)\),
\eqref{eq:dual-spectral-characterization} and \eqref{eq:dual-CS} imply
\begin{align}
\label{eq:dual-upper}
\int_{(0,\infty)}
\lambda^{-1}\,\dd\mu_b(\lambda)
\le
\frac1{48}.
\end{align}

It remains to show that this upper bound is attained. Let
\[\varphi_0(x,t):=-\frac{t^2}{4},
\qquad
\psi_0:=(Id-P_{\mathcal K})\varphi_0.\]
Since \(\varphi_0\in H^1(D)\) and
\(P_{\mathcal K}\varphi_0\in\mathcal K\), we have
\(\psi_0\in
H^1(D)\cap\mathcal K^\perp
=
\mathscr V_b\). Moreover, we have
\[Y(\psi_0)=Y(\varphi_0) \quad\text{and}\quad w_{1,b}\in\mathcal K^\perp.\]
Direct calculation gives
\[Y(\varphi_0)
=
\frac14\int_Dt^2
=
\frac1{48},\]
and
\[Q_b(w_{1,b},\psi_0)=Q_b(w_{1,b},\varphi_0)
=
-\frac12\int_D
t\left(-\frac t2\right)
=
\frac14\int_Dt^2
=
\frac1{48}.\]
Consequently, 
\[\frac{|Q_b(w_{1,b},\psi_0)|^2}{Y(\psi_0)}
=
\frac{(1/48)^2}{1/48}
=
\frac1{48}.\]
Combining this with \eqref{eq:dual-upper}, we obtain
\[\sup_{0\ne\varphi\in\mathscr V_b}
\frac{|Q_b(w_{1,b},\varphi)|^2}{Y(\varphi)}
=
\frac1{48}.\]

In view of \eqref{eq:dual-spectral-characterization},
\[\int_{(0,\infty)}
\lambda^{-1}\,\dd\mu_b(\lambda)
=
\frac1{48},\]
which proves the second identity in \eqref{eq:moments}. Finally, \eqref{eq:h-representation} follows
from \eqref{eq:h-m}, \eqref{eq:m-spectral} and \eqref{eq:direct-integrals}.

\end{proof}

\begin{remark}\label{rem:Robin-moments}
The inverse moment in \eqref{eq:moments} is independent of $b$, while
the total mass splits as
\[
\mu_b((0,\infty))=\frac1{48}+\frac1{12b}.
\]
Formally, the first term is the Dirichlet mass from
\cite{GuiHuLi2026}, and the second records the finite Robin boundary
energy. 
\end{remark}

\begin{proposition}\label{prop:b-differentiability}
Let $h_b(a)$ be defined by \eqref{eq:h-fixed}. The map
\(h(a,b):=h_b(a)\)
belongs to \(C^1((0,\infty)^2)\). More precisely, let \(u_{a,b}\) be the unique maximizer in
\eqref{eq:def-h}, and define its pullback to \(D\) by
\[
 v_{a,b}(x,t):=u_{a,b}(x,at),
 \qquad (x,t)\in D.
\]
Then
\begin{align}
 \partial_a h_b(a)
 &=
 2I(v_{a,b})
 -Q_b(v_{a,b})
 +a^{-2}Y(v_{a,b}),
 \label{eq:envelope-a}\\
 \partial_b h_b(a)
 &=
 -a\int_0^1v_{a,b}(1,t)^2\,\dd t
 =
 -\int_{\Gamma_a}u_{a,b}^2\,\dd s.
 \label{eq:envelope}
\end{align}
Both partial derivatives are continuous on \((0,\infty)^2\).
\end{proposition}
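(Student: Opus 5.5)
The plan is to treat $h(a,b)$ as the maximum of a family of concave quadratic functionals on the fixed space $H^1(D)$, namely
\[
J_{a,b}(v):=2aI(v)-aQ_b(v)-a^{-1}Y(v),
\]
which by \eqref{eq:h-fixed} is maximized exactly at $v_{a,b}$. I would obtain the derivative formulas by an envelope (Danskin-type) argument, and continuity of the partials from continuity of $(a,b)\mapsto v_{a,b}$ in $H^1(D)$.

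The first step is to show that $(a,b)\mapsto v_{a,b}$ is continuous into $H^1(D)$. Fix a compact neighborhood $K\subset(0,\infty)^2$ of a point $(a_0,b_0)$. By \eqref{eq:Q-controls-L2}, together with the equivalence of $Q_b+Y$ with the $H^1$-norm, the bilinear forms
\[
\mathfrak c_{a,b}(u,v):=aQ_b(u,v)+a^{-1}Y(u,v)
\]
are uniformly coercive and uniformly bounded on $H^1(D)$ for $(a,b)\in K$. The right-hand side $aI$ is likewise uniformly bounded. The Euler--Lagrange equation reads $\mathfrak c_{a,b}(v_{a,b},\varphi)=aI(\varphi)$. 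Subtracting this equation at two parameter points and testing with $\varphi=v_{a,b}-v_{a',b'}$ gives
\[
\|v_{a,b}-v_{a',b'}\|_{H^1}\le C\bigl(|a-a'|+|b-b'|\bigr).
\]
The constant $C$ depends on a uniform bound for $\|v_{a,b}\|_{H^1}$, which follows from coercivity, and on the trace inequality. So the map is in fact locally Lipschitz.

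The second step is the envelope formula. For the $b$-derivative, fix $a$. Maximality gives
\[
J_{a,b+\delta}(v_{a,b})\le h(a,b+\delta) \quad\text{and}\quad h(a,b)\ge J_{a,b}(v_{a,b+\delta}).
\]
Since $J_{a,b+\delta}(v)-J_{a,b}(v)=-a\delta\int_0^1 v(1,t)^2\dd t$, these two inequalities sandwich the difference quotient:
\[
-a\int_0^1 v_{a,b}(1,t)^2\dd t \;\lessgtr\; \frac{h(a,b+\delta)-h(a,b)}{\delta} \;\lessgtr\; -a\int_0^1 v_{a,b+\delta}(1,t)^2\dd t,
\]
with the direction depending on the sign of $\delta$. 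The trace is continuous from $H^1(D)$ to $L^2$, so the first step lets $\delta\to0$ and yields \eqref{eq:envelope}. The same sandwich in $a$ uses
\[
\partial_aJ_{a,b}(v)=2I(v)-Q_b(v)+a^{-2}Y(v).
\]
Since $J$ is smooth in $a$ for fixed $v$, I would apply the mean value theorem to $a\mapsto J_{a,b}(v)$ at the two frozen functions. The remainder is controlled uniformly because $v_{a,b}$ stays bounded in $H^1$. This yields \eqref{eq:envelope-a}.

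The last step has two parts. First, the identity $a\int_0^1v_{a,b}(1,t)^2\dd t=\int_{\Gamma_a}u_{a,b}^2\dd s$ follows from the change of variables $y=at$ on $\Gamma_a$. Second, both right-hand sides are continuous functions of $v_{a,b}$ in $H^1(D)$ and of $(a,b)$, so by the first step the partials are continuous, and hence $h\in C^1$.

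The main obstacle is the uniform-in-parameters coercivity in the first step, specifically the dependence of the constant in \eqref{eq:Q-controls-L2} on $b$. Because that constant is $2\max\{1,b^{-1}\}$, it is bounded on compact subsets of $b>0$, which is all the argument needs.
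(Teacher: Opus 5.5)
Your proposal is correct and follows essentially the same route as the paper. In both, continuity of $(a,b)\mapsto v_{a,b}$ in $H^1(D)$ comes from uniform coercivity on compact parameter sets and from subtracting the Euler equations, and the derivative formulas then come from the two-sided maximality sandwich in each variable, with continuity of the partials read off from the formulas. The only differences are cosmetic: you extract a local Lipschitz bound rather than mere continuity, and in $a$ you use the mean value theorem where the paper computes the difference quotient $2I(v)-Q_b(v)+Y(v)/(a(a+s))$ exactly.
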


\begin{proof}
For $a,b>0$, set
\[
 \mathcal J_{a,b}(v)
 :=2aI(v)-aQ_b(v)-a^{-1}Y(v),
 \qquad v\in H^1(D),
\]
and define
\[
 \mathcal B_{a,b}(w,\varphi)
 :=aQ_b(w,\varphi)+a^{-1}Y(w,\varphi).
\]
The change of variables $(x,t)\mapsto(x,at)$ is bi-Lipschitz from
$D$ onto $D_a$.  Thus $v_{a,b}$ is the unique maximizer of
$\mathcal J_{a,b}$, and its Euler equation is
\begin{equation}\label{eq:euler}
 \mathcal B_{a,b}(v_{a,b},\varphi)=aI(\varphi),
 \qquad \varphi\in H^1(D).
\end{equation}

We first prove that
\begin{equation}\label{eq:continuity-vab}
 (a,b)\longmapsto v_{a,b}
 \quad\text{is continuous from }(0,\infty)^2\text{ to }H^1(D).
\end{equation}
Fix a compact rectangle
$K=[a_-,a_+]\times[b_-,b_+]\Subset(0,\infty)^2$. For any $\varphi \in H^{1}(D)$, the estimate in \eqref{eq:Q-controls-L2} gives
\[
 \|\varphi \|_{L^2(D)}^2
 \leq2\|\varphi (1,\cdot)\|_{L^2(0,1)}^2
 +2\|\varphi_x\|_{L^2(D)}^2.
\]
Therefore, with
\[
 c_0:=\min\{a_-,a_+^{-1},a_-b_-\}>0,
\]
one has, uniformly for $(a,b)\in K$,
\begin{align}
\label{eq:B-coercivity}
 \mathcal B_{a,b}(\varphi, \varphi)
 \geq c_0\left(
 \|\varphi_x\|_{L^2(D)}^2+
 \|\varphi_t\|_{L^2(D)}^2+
 \|\varphi(1,\cdot)\|_{L^2(0,1)}^2
 \right)\geq\frac{c_0}{3}\|\varphi\|_{H^1(D)}^2.
\end{align}
Taking \(\varphi=v_{a,b}\) in \eqref{eq:euler} and using
\eqref{eq:B-coercivity}, we obtain
\[\frac{c_0}{3}\|v_{a,b}\|_{H^1(D)}^2
\le
a|I(v_{a,b})|
\le
a_+|D|^{1/2}\|v_{a,b}\|_{H^1(D)}.\]
Hence 
\[\sup_{(a,b)\in K}\|v_{a,b}\|_{H^1(D)}<\infty.\]

Let $(a_n,b_n)\to(a,b)$ in $K$, and abbreviate
$v_n=v_{a_n,b_n}$ and $v=v_{a,b}$.  Subtracting their Euler equations
gives
\begin{align}
\label{eq:diff-B}
 \mathcal B_{a,b}(v_n-v,\varphi)
 =(a_n-a)I(\varphi)
 +\bigl(\mathcal B_{a,b}-\mathcal B_{a_n,b_n}\bigr)(v_n,\varphi).
\end{align}
By trace continuity, there are numbers $\varepsilon_n\to0$ such that
\[
 \left|
 \bigl(\mathcal B_{a,b}-\mathcal B_{a_n,b_n}\bigr)(w,\varphi)
 \right|
 \leq\varepsilon_n
 \|w\|_{H^1(D)}\|\varphi\|_{H^1(D)}.
\]
Taking $\varphi=v_n-v$ in \eqref{eq:B-coercivity} and \eqref{eq:diff-B}, and dividing by $\|v_n-v\|_{H^1(D)}$ when this norm is nonzero, we
obtain
\[
 \frac{c_0}{3}\|v_n-v\|_{H^1(D)}
 \leq |a_n-a||D|^{1/2}
 +\varepsilon_n\|v_n\|_{H^1(D)}.
\]
The right-hand side tends to zero, proving
\eqref{eq:continuity-vab}.

For $s\neq0$ with $a+s>0$, maximality gives
\begin{align*}
 \mathcal J_{a+s,b}(v_{a,b})-
   \mathcal J_{a,b}(v_{a,b})\leq h_b(a+s)-h_b(a)\leq
 \mathcal J_{a+s,b}(v_{a+s,b})-
 \mathcal J_{a,b}(v_{a+s,b}).
\end{align*}
For any $v\in H^1(D)$,
\[
 \frac{\mathcal J_{a+s,b}(v)-\mathcal J_{a,b}(v)}{s}
 =2I(v)-Q_b(v)+\frac{Y(v)}{a(a+s)}.
\]
After division by $s$, the order of the preceding bounds reverses
when $s<0$, but both endpoints converge to the same limit by
\eqref{eq:continuity-vab}.  This proves \eqref{eq:envelope-a}.
The identical two-sided comparison with $b+s$ in place of $a+s$
gives
\[
 \partial_b h_b(a)
 =-a\int_0^1v_{a,b}(1,t)^2\dd t.
\]
The substitution $y=at$ yields the second identity in
\eqref{eq:envelope}.

Finally, \eqref{eq:continuity-vab} and trace continuity show directly
from \eqref{eq:envelope-a}--\eqref{eq:envelope} that both partial
derivatives are continuous.  Hence
$h\in C^1((0,\infty)^2)$.
\end{proof}

\section{Quantitative comparison for tangential polygons}\label{sec:quantitative comparison}

The rational kernel used below is the same as in the
Dirichlet spectral argument of Gui, Hu, and Li
\cite{GuiHuLi2026}. Their majorant remains effective for
$\xi\le1$, but does not yield the required bound for
$\xi>1$ because the Robin spectral measure has different
moments. We therefore introduce a second majorant for
the large-angle regime, adapted to the Robin moment identities.

\begin{proposition}\label{prop:angular-curvature}
Let $b>0$, and let $h_b$ be defined by \eqref{eq:h-fixed}. Then the function
\[
 g_b(\alpha)
 :=
 h_b(\tan\alpha)
 -
 \left(\frac18+\frac{3}{8b}\right)\tan\alpha,
 \qquad
 0<\alpha<\frac\pi2,
\]
is strictly concave on $(0,\pi/2)$.
\end{proposition}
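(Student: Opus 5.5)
The plan is to use the spectral representation of Lemma~\ref{lem:moments} and reduce strict concavity of $g_b$ to a pointwise inequality for the rational kernel. Write $a=\tan\alpha$ and $M_0:=\mu_b((0,\infty))=\frac1{48}+\frac1{12b}$, $M_{-1}:=\int\lambda^{-1}\dd\mu_b=\frac1{48}$. By \eqref{eq:h-representation},
\[
g_b(\alpha)=\Bigl(\tfrac1{12}+\tfrac1{3b}-\tfrac18-\tfrac3{8b}\Bigr)\tan\alpha-\int_{(0,\infty)}k_\lambda(\alpha)\,\dd\mu_b(\lambda),
\qquad k_\lambda(\alpha):=\frac{\lambda\tan\alpha}{\tan^2\alpha+\lambda}.
\]
The linear coefficient equals $-\bigl(\frac1{24}+\frac1{24b}\bigr)$. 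The key observation is that this coefficient is an exact combination of the two known moments:
\[
\frac1{24}+\frac1{24b}=\frac32M_{-1}+\frac12M_0 .
\]
Hence
\[
-g_b(\alpha)=\int_{(0,\infty)}\Bigl[k_\lambda(\alpha)+\Bigl(\frac{3}{2\lambda}+\frac12\Bigr)\tan\alpha\Bigr]\dd\mu_b(\lambda).
\]
So it suffices to prove that, for every fixed $\lambda>0$, the function $\Phi_\lambda(\alpha):=k_\lambda(\alpha)+\bigl(\frac3{2\lambda}+\frac12\bigr)\tan\alpha$ is strictly convex on $(0,\pi/2)$. Since $\mu_b$ is a nonzero positive measure (its mass is positive), strict convexity then passes to the integral. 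Differentiation under the integral sign twice is justified by dominated convergence: on compact subsets of $(0,\pi/2)$, the first two $\alpha$-derivatives of $k_\lambda$ are bounded by $C(1+\lambda^{-1})$, and this is $\mu_b$-integrable because both $M_0$ and $M_{-1}$ are finite.

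For the pointwise inequality, I would compute $\Phi_\lambda''$ using $\frac{\dd}{\dd\alpha}=\sec^2\alpha\,\frac{\dd}{\dd a}$ with $\sec^2\alpha=1+a^2$. Then I would pass to the scale-invariant variable $\xi:=a^2/\lambda$, writing $k_\lambda=\sqrt\lambda\,\frac{\sqrt\xi}{1+\xi}$. After multiplying by a positive factor, the condition $\Phi_\lambda''>0$ becomes a polynomial inequality in $(a,\xi)$, or equivalently in $(\lambda,\xi)$. The terms $(\tan\alpha)''=2a(1+a^2)$ carry the weights $\frac3{2\lambda}$ and $\frac12$, and these must dominate the negative part of $k_\lambda''$. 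That negative part comes from $\partial_a^2 k_\lambda=\frac{2\lambda a(a^2-3\lambda)}{(a^2+\lambda)^3}$ and from the cross term $2a\,\partial_a k_\lambda$, where $\partial_a k_\lambda=\lambda(\lambda-a^2)/(a^2+\lambda)^2$.

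Following the remark opening this section, I would split into the regimes $\xi\le1$ and $\xi>1$.
\begin{itemize}
\item In the regime $\xi\le1$, the term $\partial_a k_\lambda\ge0$, so only $\partial_a^2k_\lambda$ can be negative. The weight $\frac3{2\lambda}$ alone should control it, using $a^2\le\lambda$; this reproduces the Dirichlet majorant of \cite{GuiHuLi2026}.
\item In the regime $\xi>1$, the cross term $2a\lambda(\lambda-a^2)/(a^2+\lambda)^2$ is negative and grows like $a\lambda$ relative to $1$ rather than to $1/\lambda$. There the weight $\frac12$ coming from the Robin mass $M_0$ must be used, bounding this term by $a(1+a^2)$ times a constant below $1$ with strict slack.
\end{itemize}

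The main obstacle I expect is precisely this second regime: checking that the fixed weights $(\frac32,\frac12)$ suffice uniformly in $\lambda$ and $\xi>1$, with strict inequality. If a direct bound is not sharp enough, I would first try writing $\Phi_\lambda''\cdot(a^2+\lambda)^3/(2a)$ as an explicit polynomial in $a^2$ and $\lambda$ and showing that all its coefficients are nonnegative, with at least one positive, after substituting $a^2=\lambda\xi$ with $\xi\ge1$.
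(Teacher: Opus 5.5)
There is a genuine gap. Your claim that every $\Phi_\lambda$ is strictly convex on $(0,\pi/2)$ is false for each $\lambda<1$. It fails in the regime $a^2\le\lambda$, which you treat as the easy one. As $\alpha\downarrow0$ one has $k_\lambda(\alpha)=\alpha+\bigl(\frac13-\frac1\lambda\bigr)\alpha^3+O(\alpha^5)$ and $\tan\alpha=\alpha+\frac13\alpha^3+O(\alpha^5)$, so
\[
\Phi_\lambda''(\alpha)=3\Bigl(1-\frac1\lambda\Bigr)\alpha+O(\alpha^3),
\]
which is negative near $0$ when $\lambda<1$. In your bookkeeping, $(1+a^2)^2\partial_a^2k_\lambda\approx-6a/\lambda$. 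The weight $\frac{3}{2\lambda}$ supplies only $3a/\lambda$, and the other positive contributions (the cross term and the weight $\frac12$) are $O(a)$, not $O(a/\lambda)$.

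Equivalently, set $\xi=a^2$ and $K_\xi(\lambda)=\frac{2\lambda(3\xi\lambda-\xi-\lambda^2+3\lambda)}{(\xi+\lambda)^3}$. Then $\Phi_\lambda''=a(1+a^2)\bigl(1+3\lambda^{-1}-K_{a^2}(\lambda)\bigr)$. So you are asserting the single majorant $K_\xi(\lambda)<1+3\lambda^{-1}$ for all $\xi,\lambda>0$, but $K_0(\lambda)=6\lambda^{-1}-2$.

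Changing the constants does not rescue this. Suppose $k_\lambda+(c_{-1}\lambda^{-1}+c_0)\tan\alpha$ is convex for every $\lambda$. Letting $\xi\downarrow0$ and then $\lambda\downarrow0$ forces $c_{-1}\ge3$. Taking $\xi=\lambda\to\infty$, where $K_\lambda(\lambda)=\frac{\lambda+1}{2\lambda}$, forces $c_0\ge\frac14$. Then $c_{-1}M_{-1}+c_0M_0\ge\frac{13}{192}+\frac1{48b}$, which exceeds the required $\frac1{24}+\frac1{24b}$ once $b>\frac45$. By contrast, your inequality does hold whenever $a^2\ge\lambda$, so the regime you expected to be the obstacle is harmless.

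The missing idea is that the split of the linear term between the two moments must depend on $\alpha$, not only be a fixed split into the known moments. The paper uses $\int(1-\lambda^{-1})\,\dd\mu_b=\frac1{12b}\ge0$. Hence, for each fixed $\xi=a^2$, it suffices to find a majorant $K_\xi(\lambda)<A_\xi+B_\xi\lambda^{-1}$, uniform in $\lambda>0$, with $A_\xi+B_\xi=4$ and $A_\xi\le1$. Integrating against $\mu_b$ then gives
\[
\int K_\xi\,\dd\mu_b<\frac{A_\xi+B_\xi}{48}+\frac{A_\xi}{12b}\le\frac1{12}+\frac1{12b}.
\]
The paper's choices are:
\begin{itemize}
\item for $\xi\le1$, $A_\xi=\frac{4\xi-2}{1+\xi}$ and $B_\xi=\frac6{1+\xi}$, so $(A_\xi,B_\xi)\to(-2,6)$ as $\xi\to0$, matching $K_0$;
\item for $\xi\ge1$, $A=\frac12$ and $B=\frac72$.
\end{itemize}
Each is verified by an explicit polynomial positivity check. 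Consequently the case split must be made in $a^2$ alone rather than in $a^2/\lambda$: the coefficients must be independent of $\lambda$ to be integrated using only the two known moments.
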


\begin{proof}
Let $\alpha\in (0,\pi/2)$, and let $a=\tan \alpha$. For $\xi,\lambda>0$, introduce
\begin{equation*}
 K_\xi(\lambda)
 :=
 \frac{2\lambda(3\xi\lambda-\xi-\lambda^2+3\lambda)}
 {(\xi+\lambda)^3}.
\end{equation*}
A direct differentiation gives
\begin{equation*}
 \frac{\dd^2}{\dd\alpha^2}
 \left(\frac{a\lambda}{a^2+\lambda}\right)
 =-a(1+a^2)K_{a^2}(\lambda).
\end{equation*}
Since $\mu_b$ has finite mass and the differentiated kernel is locally
uniformly bounded for $a>0$, differentiation under the integral sign in
\eqref{eq:h-representation} is justified.  Using
$a''=2a(1+a^2)$, we obtain
\begin{align*}
\frac{d^{2}}{d \alpha^{2}}g_{b}(\alpha)=a(1+a^{2})\Big(\int_{(0,\infty)} K_{a^{2}}(\lambda) \dd\mu_{b}(\lambda)-\frac{1}{12}-\frac{1}{12b}\Big).
\end{align*}
Thus it remains to prove for every $\xi>0$, 
\begin{align}
\label{eq:K-upper}
\int_{(0,\infty)} K_{\xi}(\lambda) \dd\mu_{b}(\lambda)<\frac{1}{12}+\frac{1}{12b}.
\end{align}

We prove this estimate in two ranges.
\medskip

Assume first that \(0<\xi\le1\). We claim that for every $\xi \in (0,1]$ and all $\lambda>0$,
\begin{equation}\label{eq:K-bound-small}
K_\xi(\lambda)
<
4+\frac{6}{1+\xi}
\left(\frac1\lambda-1\right).
\end{equation}

Indeed, after multiplying the difference, the right-hand side minus the left-hand side, by the positive factor $\frac{1}{2}\xi^{-1}\lambda(\xi+\lambda)^{3}(1+\xi)$, we obtain
\[\begin{aligned}
P_{1}(\xi, \lambda)
=
2\lambda\xi^3
+(6\lambda^2-\lambda+3)\xi^2+(3\lambda^3-2\lambda^2+9\lambda)\xi
+\lambda^2(3\lambda^2-9\lambda+10).
\end{aligned}
\]
All coefficients of this polynomial in \(\xi\) are strictly positive
for \(\lambda>0\). Indeed,

\[6\lambda^2-\lambda+3>0,\quad 3\lambda^2-2\lambda+9>0\quad \text{and}\quad 3\lambda^2-9\lambda+10>0,\]
because the three quadratic polynomials have negative
discriminants. Hence \(P_{1}(\xi, \lambda)>0\), proving
\eqref{eq:K-bound-small}.
Write
\[A_\xi:=\frac{4\xi-2}{1+\xi},
\qquad
B_\xi:=\frac6{1+\xi}.\]
Then \eqref{eq:K-bound-small} becomes
\[K_\xi(\lambda)
<
A_\xi+\frac{B_\xi}{\lambda}.\]
Since
\[
A_\xi+B_\xi=4,
\qquad
A_\xi\le1,
\]
the two moment identities in \eqref{eq:moments} give

\[
\begin{aligned}
\int_{(0,\infty)} K_\xi\,\dd\mu_b
<
A_\xi\frac{b+4}{48b}
+
B_\xi\frac1{48}=
\frac{A_\xi+B_\xi}{48}
+\frac{A_\xi}{12b}\le
\frac1{12}+\frac1{12b}.
\end{aligned}
\]
The first inequality is strict because
\eqref{eq:K-bound-small} is strict for every \(\lambda>0\) and
\(\mu_b((0,\infty))
=
\frac{b+4}{48b}>0\).
Thus \eqref{eq:K-upper} holds for \(0<\xi\le1\).
\medskip

We next consider \(\xi\ge1\). We claim that for every $\xi \geq 1$ and all $\lambda>0$
\begin{equation}\label{eq:K-bound-large}
K_\xi(\lambda)
<
\frac12+\frac{7}{2\lambda}.
\end{equation}

After multiplying the difference of \eqref{eq:K-bound-large}, the
right-hand side minus left-hand side, by the positive factor
$2\lambda(\xi+\lambda)^3$, we obtain
\begin{equation*}
 \begin{aligned}
 P_{2}(\xi,\lambda)
 =5\lambda^4-(9\xi+5)\lambda^3
 +(3\xi^2+25\xi)\lambda^2+(\xi^3+21\xi^2)\lambda+7\xi^3.
 \end{aligned}
\end{equation*}
Writing $\lambda=t\xi$ gives
\begin{equation*}
 \frac{P_{2}(\xi,t\xi)}{\xi^3}
 =\xi t(t-1)^2(5t+1)-5t^3+25t^2+21t+7.
\end{equation*}
Since $\xi\ge1$, the right-hand side is bounded below by
\begin{equation*}
 q(t):=5t^4-14t^3+28t^2+22t+7.
\end{equation*}
For $0<t\le1$,
\[
 q(t)=5t^4+14t^2(2-t)+22t+7>0,
\]
whereas, for $t=1+s\ge1$,
\[
 q(1+s)=5s^4+6s^3+16s^2+56s+48>0.
\]
Thus $P_{2}(\xi,\lambda)>0$, and \eqref{eq:K-bound-large} follows.
Integrating \eqref{eq:K-bound-large} and using
\eqref{eq:moments}, we get
\begin{align*}
\int_{(0,\infty)} K_\xi\,\dd\mu_b
<
\frac12\frac{b+4}{48b}
+
\frac72\frac1{48}
=
\frac1{12}+\frac1{24b}<
\frac1{12}+\frac1{12b}.
\end{align*}
Therefore \eqref{eq:K-upper} also holds for \(\xi\ge1\).
Hence \eqref{eq:K-upper} holds for every \(\xi>0\), which proves the strict concavity of $g_{b}$.
\end{proof}

\begin{proposition}\label{pro:scaling-deficit}
Let $a, b>0$, and let $h_b$ be defined by \eqref{eq:h-fixed}. Then for every $z\ge1$, we have
\begin{equation}\label{eq:scaling-deficit}
\begin{aligned}
z^2h_{b\sqrt z}(a)-h_b(a)-a(\frac{1}{8}+\frac{3}{8b})(z-1)
\ge{}&\frac a{16}(z-1)^2
+\frac a{8b}\rho(z),
\end{aligned}
\end{equation}
where $\rho(z)=2z^{3/2}-3z+1$. 
\end{proposition}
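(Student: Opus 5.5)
The plan is to treat the left-hand side of \eqref{eq:scaling-deficit} as a function of $z$, differentiate it, and integrate a pointwise lower bound for the derivative from $1$ to $z$. That pointwise bound is the differential scaling estimate announced in the introduction, which has not yet been proved at this point in the paper. Fix $a,b>0$ and set
\[
\varphi(z):=z^2h_{b\sqrt z}(a),\qquad z\ge1 .
\]
By \cref{prop:b-differentiability}, $h\in C^1((0,\infty)^2)$, so $\varphi$ is $C^1$. With $B:=b\sqrt z$, the chain rule gives
\[
\varphi'(z)=2zh_B(a)+z^2\,\partial_bh_B(a)\,\frac{b}{2\sqrt z}
=z\Bigl[2h_B(a)+\frac B2\,\partial_bh_B(a)\Bigr].
\]

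The key intermediate claim is the estimate from the introduction,
\begin{equation*}
2h_B(a)+\frac B2\,\partial_bh_B(a)\ \ge\ a\Bigl(\frac18+\frac3{8B}\Bigr)\qquad\text{for all }a,B>0 .
\end{equation*}
Granting it, we get $\varphi'(z)\ge \frac{a}{8}z+\frac{3a}{8b}\sqrt z$. Integrating over $[1,z]$ gives
\[
\varphi(z)-h_b(a)\ge \frac a{16}(z^2-1)+\frac a{4b}(z^{3/2}-1).
\]
Subtracting $a(\frac18+\frac3{8b})(z-1)$, the $b$-free part becomes $\frac a{16}(z^2-2z+1)=\frac a{16}(z-1)^2$. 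The Robin part becomes $\frac a{8b}(2z^{3/2}-2-3z+3)=\frac a{8b}\rho(z)$. This is exactly \eqref{eq:scaling-deficit}, so the whole statement reduces to the displayed pointwise estimate.

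To prove the pointwise estimate, I would first rewrite it variationally. Let $u=u_{a,B}$, $G:=\int_{D_a}|\nabla u|^2$ and $\Theta:=\int_{\Gamma_a}u^2$. Testing the Euler equation with $u$ gives $h_B(a)=\int u=G+B\Theta$. By \eqref{eq:envelope}, $\partial_bh_B(a)=-\Theta$. Hence the left-hand side of the pointwise estimate equals
\[
2G+\tfrac32B\Theta=2h_B(a)-\tfrac12B\Theta .
\]
So it suffices to show $B\Theta\le 4h_B(a)-\frac a4-\frac{3a}{4B}$.

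I would try two routes for this, both relying on the exact information already available. The first route uses the scaling identity \eqref{eq:cell-scaling}: $2h+\frac B2\partial_bh=\frac12\frac{\dd}{\dd r}\bigl[r^4h_{Br}(a)\bigr]_{r=1}$, so the estimate is a lower bound on the growth rate of $\T^{\mathrm{Mix}}_B(rD_a)$ under dilation. This can be attacked by a Rellich/Pohozaev identity on $D_a$ with the field $(x,y)$. That field is tangent to the two Neumann sides, since they lie on rays through the origin, and it has normal component $1$ on $\Gamma_a$. Then $\int u$, $G$, the trace term and boundary gradient terms can be related exactly. The second route uses the spectral picture. By \eqref{eq:wb} and \eqref{eq:pb}, $w_{1,b}=w_{1,\infty}+(\frac12-t)/b$, so the $b$-dependence of the data is explicit. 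Combined with the exact moments of \cref{lem:moments}, this should give $\partial_bh$ in terms of integrals against $\mu_b$ that can be bounded by the same kind of rational majorants as in \cref{prop:angular-curvature}.

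The main obstacle is this pointwise inequality, because $\mu_b$ itself moves with $b$. Cruder bounds fail: using $h_B\ge a(\frac1{16}+\frac1{4B})$ from $\frac{a\lambda}{a^2+\lambda}\le a$ together with $G\ge0$ only yields a constant $\frac3{32}<\frac18$. I would therefore try the Pohozaev route first, since it captures the exact scaling. I expect the constant $\frac18$ to come from the same Dirichlet-type mass $\frac1{48}$ and inverse moment $\frac1{48}$ that drive the angular concavity.
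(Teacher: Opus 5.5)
Your reduction is exactly the paper's, and it is correct. The chain-rule formula for $\frac{\dd}{\dd z}\bigl[z^2h_{b\sqrt z}(a)\bigr]$, the integration over $[1,z]$, the algebra producing $\frac a{16}(z-1)^2+\frac a{8b}\rho(z)$, and the identity $2h_B(a)+\frac B2\partial_bh_B(a)=2G+\frac32B\Theta$ all match the paper.

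The gap is that the pointwise estimate $2G+\frac32B\Theta\ge a\bigl(\frac18+\frac3{8B}\bigr)$, which carries the entire content of the proposition, is never proved. The Pohozaev and spectral routes are only announced, and neither is ready-made. In two dimensions the Dirichlet-integral term of the Rellich identity for the field $(x,y)$ vanishes, so that identity only relates $h_B(a)$ to boundary quantities on $\Gamma_a$, including tangential-derivative and corner terms you would still have to control. The spectral route would require differentiating $\mu_b$ in $b$, although the space $(\mathcal H_b,Q_b)$ on which $A_b$ acts itself changes with $b$. Your reformulation $B\Theta\le4h_B(a)-\frac a4-\frac{3a}{4B}$ also hides the useful structure: in $2G+\frac32B\Theta$ both quantities enter with positive coefficients, so lower bounds on $G$ and on $\Theta$ suffice, and no spectral information is needed.

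The missing step is two elementary Cauchy--Schwarz bounds, one for each term. The function $\phi(x,y)=\frac{1-x^2}{4}$ vanishes on $\Gamma_a$, so testing the weak form of \eqref{eq:mixed-cell-pde} with $\phi$ gives $\int_{D_a}\nabla u\cdot\nabla\phi=\int_{D_a}\phi=\frac a{16}$. Since also $\int_{D_a}|\nabla\phi|^2=\frac a{16}$, you get $G\ge\frac a{16}$. Testing with $1$ gives $B\int_{\Gamma_a}u=|D_a|=\frac a2$, and because $\mathcal H^1(\Gamma_a)=a$, Cauchy--Schwarz yields $\Theta\ge\frac a{4B^2}$. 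Hence $2G+\frac32B\Theta\ge\frac a8+\frac{3a}{8B}$, and your integration argument then finishes the proof.

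Your ``cruder'' computation was in fact one step away. Note that $2G+\frac32B\Theta=\frac32h_B(a)+\frac12G$, and your constant $\frac3{32}$ comes from bounding $\frac32h_B(a)$ while discarding $\frac12G$. Keeping $\frac12G\ge\frac a{32}$ supplies exactly the missing $\frac a{32}$. Equivalently, your bound $h_B(a)\ge a\bigl(\frac1{16}+\frac1{4B}\bigr)$ coincides with the energy of the test function $p_B=\phi+\frac1{2B}$. The point is to use its two pieces separately on $G$ and on $\Theta$, since the target weights them differently.
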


\begin{proof}
For $s\ge1$, set
\[
F(s):=s^2h_{b\sqrt s}(a).
\]
The $C^1$ regularity from \cref{prop:b-differentiability} and the
chain rule give
\begin{align}
\label{eq:dF}
F'(s)
&=s\left[
2h_{b\sqrt s}(a)
+\frac{b\sqrt s}{2}\partial_b h_{b\sqrt s}(a)
\right].
\end{align}

We now claim that for every $a,b>0$, 
\begin{align}
\label{eq:scaling-estimate}
2h_{b}(a)+\frac{b}{2}\partial_{b}h_{b}(a)\geq a(\frac{1}{8}+\frac{3}{8b}).
\end{align}
Let $u=u_{a,b}$ be the maximizer in \eqref{eq:def-h}, and set
\[
G:=\int_{D_a}|\nabla u|^2,
\qquad
Z:=\int_{\Gamma_a}u^2.
\]
From \eqref{eq:mixed-cell-pde} and \eqref{eq:mix-h}, we have
\begin{equation}\label{eq:h-GZ}
h_b(a)=\int_{D_a}u=G+bZ.
\end{equation}
Combining \eqref{eq:h-GZ} with \eqref{eq:envelope} 
yields
\begin{equation}\label{eq:combined-GZ}
2h_b(a)+\frac b2\partial_bh_b(a)
=2G+\frac{3b}{2}Z.
\end{equation}

Let $\phi(x,y)=(1-x^2)/4$. Since $\phi=0$ on $\Gamma_a$, the weak
equation gives
\[
\int_{D_a}\nabla u\cdot\nabla\phi
=\int_{D_a}\phi=\frac a{16},
\qquad
\int_{D_a}|\nabla\phi|^2=\frac a{16}.
\]
Cauchy--Schwarz therefore gives
\begin{equation}\label{eq:G-lower}
G\ge\frac a{16}.
\end{equation}
Testing the weak equation \eqref{eq:mixed-cell-pde} with $1$ gives
\[
b\int_{\Gamma_a}u=\frac a2.
\]
Since $\mathcal H^1(\Gamma_a)=a$, another application of
Cauchy--Schwarz yields
\begin{equation}\label{eq:Z-lower}
Z\ge\frac{a}{4b^2}.
\end{equation}
Substituting \eqref{eq:G-lower} and \eqref{eq:Z-lower} into \eqref{eq:combined-GZ} proves
\eqref{eq:scaling-estimate}.

Combining \eqref{eq:dF} and \eqref{eq:scaling-estimate} with parameter $b\sqrt{s}$, we have
\begin{align*}
F'(s)\geq sa(\frac{1}{8}+\frac{3}{8b\sqrt{s}})=a(\frac{s}{8}+\frac{3\sqrt{s}}{8b}).
\end{align*}
Integrating from $1$ to $z$ gives
\[
F(z)-F(1)
\ge a\left[
\frac{z^2-1}{16}
+\frac{z^{3/2}-1}{4b}
\right].
\]
Subtracting
$a(\frac{1}{8}+\frac{3}{8b})(z-1)=a[(z-1)/8+3(z-1)/(8b)]$ gives exactly
\eqref{eq:scaling-deficit}.  
\end{proof}

\begin{remark}
The common coefficient $c_b=\frac18+\frac{3}{8b}$ in
\cref{prop:angular-curvature,pro:scaling-deficit}
ensures that the linear terms in the angular and scaling
estimates cancel, leaving the Dirichlet-type quadratic
deficit and the additional Robin remainder
$\rho(z)/(8b)$, which is positive for $z>1$.
\end{remark}

\begin{proof}[Proof of \cref{thm:perimeter-deficit}]
Let  $N\geq 3$ be an integer, and let $P$ be a nondegenerate tangential $N$-gon of area $A$, inradius $r$, and perimeter $L(P)$. Let $R_{N,A}$ be the regular $N$-gon of area $A$. Write the $i$-th interior angle of $P$ as $\pi- 2\alpha_{i}$,
\[a_{i}=\tan \alpha_{i}, \quad a_{*}=\tan \frac{\pi}{N}, \quad b=\beta r, \quad z=\frac{L(P)^{2}}{L( R_{N,A})^{2}}.\]

Let $h_b$ be defined by \eqref{eq:h-fixed}, and let $c_{b}=\frac{1}{8}+\frac{3}{8b}$.

By
\cref{lem:geometry-ratio}, the regular polygon $R_{N,A}$ has inradius
$r\sqrt z$ and effective Robin parameter $b\sqrt z$. The mixed-cell
relaxation \eqref{eq:relaxation} and its exactness for the regular
polygon give
\begin{equation}\label{eq:main-quantitative}
\begin{aligned}
\T_\beta(R_{N,A})-\T_\beta(P)
&\ge 2r^4\left[
Nz^2h_{b\sqrt z}(a_*)-\sum_{i=1}^Nh_b(a_i)
\right]\\
&=2r^{4} \Big[N\big(h_{b}(a_{*})-c_{b}a_{*}\big)-\sum_{i=1}^{N}\big(h_{b}(a_{i})-c_{b}a_{i} \big)\Big]\\
&+2Nr^{4} \Big[z^{2}h_{b\sqrt{z}}(a_{*})-h_{b}(a_{*})-a_{*}c_{b}(z-1) \Big].
\end{aligned}
\end{equation}

By \cref{prop:angular-curvature}, the function
\[\alpha \mapsto h_{b}(\tan \alpha)-c_{b}\tan \alpha\]
is strictly concave. Since
$\sum_i\alpha_i=\pi$, Jensen's inequality applies and yields
\begin{equation}\label{eq:main-Jensen}
N\big(h_{b}(a_{*})-c_{b}a_{*}\big)-\sum_{i=1}^{N}\big(h_{b}(a_{i})-c_{b}a_{i} \big)\geq 0
\end{equation}
with equality if and only if all $\alpha_i=\pi/N$, $i=1, 2, \ldots, N$. Applying
\cref{pro:scaling-deficit} gives
\begin{align}
\label{eq:main-scaling}
z^{2}h_{b\sqrt{z}}(a_{*})-h_{b}(a_{*})-a_{*}c_{b}(z-1)\geq \frac{a_{*}}{16}(z-1)^{2}+\frac{a_{*}}{8b}\rho(z)
\end{align}
whose right-hand side vanishes if and only if $z=1$. From \cref{lem:geometry-ratio}, $z=1$ if and only if $P$ is a regular $N$-gon. 

Drop the nonnegative angular deficit \eqref{eq:main-Jensen} in \eqref{eq:main-quantitative}. Since $S_*=Na_*$,
$A=r^2zS_*$, and $z=L(P)^{2}/L(R_{N, A})^{2}$, direct algebra gives
\[
\frac{r^4S_*}{8}(z-1)^2
=
\frac{A^2}{8S_*}
\left(1-\frac1z\right)^2.
\]
For the Robin term, we have
\[
\begin{aligned}
\frac{r^4S_*}{4b}\rho(z)
&=\frac{A^2}{2\beta L(R_{N,A})}
(1-z^{-\frac{1}{2}})^2(2+z^{-\frac{1}{2}}).
\end{aligned}
\]
Substituting $S_*=N\tan(\pi/N)$ proves
\eqref{eq:perimeter-deficit}. 

If $P$ is not regular, the angles $\alpha_i$ are not all equal,
by \cref{lem:geometry-ratio}. Since
$\sum_{i=1}^N\alpha_i=\pi$ and $g_b$ is strictly concave,
the angular deficit in \eqref{eq:main-Jensen} is strictly
positive. Hence \eqref{eq:main-quantitative} and
\eqref{eq:main-scaling} give strict inequality in
\eqref{eq:perimeter-deficit}.
Conversely, if $P$ is regular, it is congruent to $R_{N,A}$,
and both sides of \eqref{eq:perimeter-deficit} vanish.
This proves the equality statement.
\end{proof}

\begin{proof}[Proof of \cref{cor:maximizer}]
The assertion follows immediately from
\cref{thm:perimeter-deficit} and its equality case.
\end{proof}

\begin{proof}[Proof of \cref{cor:triangle}]
Every nondegenerate triangle is a convex tangential $3$-gon. Apply
\cref{thm:perimeter-deficit} with
$N=3$ and $\tan(\pi/3)=\sqrt3$.
\end{proof}

\section{Monotonicity along equal-area regular polygons}
\label{sec:N-monotonicity-short}
Let $c_{b}=\frac{1}{8}+\frac{3}{8b}$. For $b>0$, set
\[
g_b(\alpha):=h_b(\tan\alpha)-c_b\tan\alpha.
\]
By \cref{prop:angular-curvature}, $g_b$ is strictly concave on
$(0,\pi/2)$. We shall use the endpoint $0$. From
\eqref{eq:h-representation} and
$0\le a\lambda/(a^2+\lambda)\le a$, one has $h_b(a)=O(a)$ as
$a\downarrow0$. Hence $g_b(\alpha)\to0$ as $\alpha\downarrow0$.
Defining $g_b(0)=0$ gives a continuous concave extension to
$[0,\pi/2)$.

\begin{proof}[Proof of \cref{thm:N-monotonicity}]
Let $k\geq 3$ be an integer. Define
\[
a_k:=\tan\frac\pi k,
\qquad
S_k:=ka_k,
\qquad
r_k:=\sqrt{\frac A{S_k}}.
\]
Let
\[
b:=\beta r_{k},
\qquad
z_{k}:=\frac{S_k}{S_{k+1}}.
\]
Then $r_{k+1}=r_{k}\sqrt z_{k}$ and $\beta r_{k+1}=b\sqrt z_{k}$.

We first verify $z_{k}>1$. The function
$q(x)=\tan x/x$ satisfies
\[
q'(x)=\frac{x-\sin x\cos x}{x^2\cos^2x}>0,
\qquad 0<x<\frac\pi2,
\]
because the numerator vanishes at $0$ and has derivative
$2\sin^2x>0$. Since
\(
S_k=\pi q(\pi/k)
\), we have $S_k>S_{k+1}$. The exact regular-cell formula \eqref{eq:regular-value} gives
\begin{equation}\label{eq:N-difference}
\frac{\T_\beta(R_{k+1,A})-\T_\beta(R_{k,A})}{2r_k^4}
=(k+1)z_{k}^2h_{b\sqrt z_{k}}(a_{k+1})-kh_b(a_k).
\end{equation}
Apply Jensen's inequality to the $(k+1)$ angles
\[
0,\underbrace{\frac\pi k,\ldots,\frac\pi k}_{k\text{ times}},
\]
whose mean is $\pi/(k+1)$. Let $g_{b}$ be defined as in  \cref{prop:angular-curvature}. Concavity and $g_b(0)=0$ yield
\[
kg_b\left(\frac\pi k\right)
\le (k+1)g_b\left(\frac{\pi}{k+1}\right).
\]
Since $S_{k}-S_{k+1}=S_{k+1}(z_{k}-1)=(k+1)a_{k+1}(z_{k}-1)$, this is equivalent to
\begin{equation}\label{eq:N-Jensen}
kh_b(a_{k})
\le (k+1)\bigl[h_b(a_{k+1})+a_{k+1}c_b(z_{k}-1)\bigr].
\end{equation}
Combining \eqref{eq:N-difference}, \eqref{eq:N-Jensen}, and
\cref{pro:scaling-deficit} gives
\[
\begin{aligned}
\frac{\T_\beta(R_{k+1,A})-\T_\beta(R_{k,A})}{2r_{k}^4}
&\ge (k+1)\left[
\frac{a_{k+1}}{16}(z_{k}-1)^2+\frac{a_{k+1}}{8b}\rho(z_{k})
\right]\\
&=\frac{S_{k+1}}{16}(z_{k}-1)^2
+\frac{S_{k+1}}{8b}\rho(z_{k}).
\end{aligned}
\]
Multiplication by $2r_{k}^4$ and then setting $k=N$ proves
\eqref{eq:N-monotonicity-quantitative}. Since $z_{k}>1$,
$\rho(z_{k})>0$ and the lower bound is strictly positive.
\end{proof}

\begin{remark}
Retaining only the first term gives a $\beta$-independent
lower bound that agrees with the quantitative Dirichlet
monotonicity gap in \cite{GuiHuLi2026}. The second term is
an additional positive remainder from the Robin scaling
estimate comparing the inradii $r_k$ and $r_{k+1}$.
\end{remark}

\section{Large and small Robin parameter limits}
\label{sec:large-small discussion}

We briefly discuss the limits $\beta\to\infty$ and
$\beta\downarrow0$ and their consequences for our estimates.
For asymptotic expansions of Robin energies in these regimes,
see \cite{ButtazzoOgnibene2026} and the references therein.
Here we give direct arguments for the limits needed below.

\subsection{The limit $\beta\to\infty$}

Let $\T_D$ denote Dirichlet torsional rigidity. For every fixed
bounded Lipschitz domain $\Omega$,
\begin{equation}\label{eq:global-Dirichlet-limit}
\T_\beta(\Omega)\longrightarrow\T_D(\Omega)
\qquad\text{as }\beta\to\infty.
\end{equation}
Indeed, let $u_{\Omega,\beta}$ be the Robin torsion function, so that
\begin{align}
\label{eq:weak-robin}
\int_\Omega \nabla u_{\Omega,\beta}\cdot\nabla\varphi
+\beta\int_{\partial\Omega}u_{\Omega,\beta}\varphi
=
\int_\Omega\varphi
\qquad
\text{for every }\varphi\in H^1(\Omega).
\end{align}
By Friedrichs' inequality
\cite[Theorem~1.9]{Necas2012}, there exists a constant
$C_{\Omega}>0$ such that
\[
\|v\|^{2}_{H^{1}(\Omega)}
\le
C_\Omega
\left(
\|\nabla v\|_{L^2(\Omega)}^2
+
\|v\|_{L^2(\partial\Omega)}^2
\right).
\]
Consequently, for every $\beta_0>0$ and $\beta\ge\beta_0$,
\begin{align}
\label{eq:PT}
\|v\|_{H^1(\Omega)}^2
\le
C_{\Omega,\beta_0}
\left(
\|\nabla v\|_{L^2(\Omega)}^2
+
\beta\|v\|_{L^2(\partial\Omega)}^2
\right), \qquad	\beta\geq\beta_{0}>0.
\end{align}
Testing \eqref{eq:weak-robin} with
$\varphi=u_{\Omega,\beta}$ and applying \eqref{eq:PT} with
$v=u_{\Omega,\beta}$ gives
\[
\|u_{\Omega,\beta}\|_{H^1(\Omega)}
\le C,
\qquad \beta\ge\beta_0,
\]
and, at the same time,
\[
\beta\,
\|u_{\Omega,\beta}\|_{L^2(\partial\Omega)}^2
\le
\int_\Omega u_{\Omega,\beta}
\le C
\]
for some constant $C>0$ depending only on $\Omega$ and $\beta_{0}$.
Hence, along any sequence $\beta_j\to\infty$, after passing to a
subsequence,
\[
u_{\Omega,\beta_j}\rightharpoonup u
\qquad\text{weakly in }H^1(\Omega),
\]
while
\[
\|u_{\Omega,\beta_j}\|_{L^2(\partial\Omega)}^2
\le\frac{C}{\beta_j}\longrightarrow0.
\]
By continuity of the trace operator, $u$ has zero trace and hence
$u\in H_0^1(\Omega)$. Moreover, taking $\varphi\in H^{1}_{0}(\Omega)$ in \eqref{eq:weak-robin} eliminates the boundary term, and passing
to the limit yields
\[
\int_\Omega\nabla u\cdot\nabla\varphi
=
\int_\Omega\varphi
\qquad
\text{for every }\varphi\in H_0^1(\Omega).
\]
Thus $u$ is the Dirichlet torsion function. By uniqueness of the Dirichlet torsion function, the whole family
converges weakly to $u$ in $H^1(\Omega)$. Since
$v\mapsto\int_\Omega v$ is a continuous linear functional on
$H^1(\Omega)$, \eqref{eq:global-Dirichlet-limit} follows.

For a fixed tangential $N$-gon $P$ of area $A$, we may therefore let
$\beta\to\infty$ in \eqref{eq:perimeter-deficit} to obtain
\[
\T_D(R_{N,A})-\T_D(P)
\ge
\frac{A^2}{8N\tan(\pi/N)}
\left(
1-\frac{L(R_{N,A})^2}{L(P)^2}
\right)^2.
\]
Thus our perimeter estimate recovers the quantitative Dirichlet
bound of Gui, Hu, and Li~\cite{GuiHuLi2026}. Likewise,
\eqref{eq:N-monotonicity-quantitative} recovers their quantitative
monotonicity bound for equal-area regular polygons, since the
additional Robin term vanishes as $\beta\to\infty$.

\subsection{The limit $\beta\downarrow0$}
We next consider the regime $\beta\downarrow0$. For every fixed
bounded Lipschitz polygon $P$,
\begin{equation}\label{eq:small-beta-torsion}
\T_\beta(P)
=
\frac{|P|^2}{\beta L(P)}+O(1)
\qquad\text{as }\beta\downarrow0.
\end{equation}
We include a direct argument. Let $u_\beta:=u_{P,\beta}$. Taking $\varphi\equiv 1$ in \eqref{eq:weak-robin} gives
\[
\beta\int_{\partial P}u_\beta=|P|.
\]
Thus the boundary mean of $u_\beta$ is
\[
m_\beta
:=
\frac1{L(P)}\int_{\partial P}u_\beta
=
\frac{|P|}{\beta L(P)}.
\]
Set
\(
w_\beta:=u_\beta-m_\beta\).
Then
\[
\int_{\partial P}w_\beta=0.
\]
Testing \eqref{eq:weak-robin} with $\varphi=w_\beta$ and using
$\int_{\partial P}w_\beta=0$ gives
\[
\int_P|\nabla w_\beta|^2
+
\beta\int_{\partial P}w_\beta^2
=
\int_P w_\beta.
\]
The Poincar\'e inequality for functions with zero boundary mean then yields
\[
\|\nabla w_\beta\|_{L^2(P)}^2
\le
|P|^{1/2}\|w_\beta\|_{L^2(P)}
\le
C_P|P|^{1/2}\|\nabla w_\beta\|_{L^2(P)}.
\]
Hence
\[
\|w_\beta\|_{H^1(P)}\le C
\qquad\text{for all }\beta>0,
\]
where $C$ depends only on $P$. In particular, by the
Cauchy--Schwarz inequality,
\[\left|\int_P w_\beta\right|\le C.\]
It follows that
\[
\begin{aligned}
\T_\beta(P)
&=
\int_Pu_\beta
=
m_\beta|P|+\int_Pw_\beta\\
&=
\frac{|P|^2}{\beta L(P)}+O(1),
\end{aligned}
\]
which proves \eqref{eq:small-beta-torsion}.

Consequently, if $|P|=A$, then
\[
\T_\beta(R_{N,A})-\T_\beta(P)
=
\frac{A^2}{\beta}
\left(
\frac1{L(R_{N,A})}
-
\frac1{L(P)}
\right)
+O(1).
\]
For a nonregular tangential $N$-gon,
\[
L(P)>L(R_{N,A}),
\]
so the coefficient of $1/\beta$ is strictly positive. Hence
\[
\T_\beta(R_{N,A})-\T_\beta(P)
\asymp \frac1\beta
\qquad\text{as }\beta\downarrow0,
\]
in agreement with the order of the additional Robin term in
\eqref{eq:perimeter-deficit}.

	\section*{Statement on the use of AI tools}
	During the preparation of this manuscript, the authors used ChatGPT (OpenAI) for language and presentation assistance and as an interactive tool for exploratory mathematical discussion. These interactions served to explore and refine possible lines of argument. All mathematical statements, proofs, and computations in the final manuscript were independently verified by the authors, who take full responsibility for the article.


\end{document}